\theoremstyle{plain}
\newtheorem{Theorem}{Theorem}
\newtheorem{theorem}[Theorem]{Theorem}
\newtheorem{lemma}[Theorem]{Lemma}
\newtheorem{corollary}[Theorem]{Corollary}
\numberwithin{table}{section}
\numberwithin{figure}{section}
\numberwithin{equation}{section}
\definecolor{darkblue}{rgb}{.2, 0.2,.8}
\definecolor{darkgreen}{rgb}{0,0.5,0.3}
\definecolor{darkred}{rgb}{.8, .1,.1}
\newcommand{\bfe}{\vect{e}}
\newcommand\demoo{\xqed{$\triangle$}}
\newcommand{\N}{\mathbb{N}}
\theoremstyle{definition}
\newcommand\xqed[1]{%
  \leavevmode\unskip\penalty9999 \hbox{}\nobreak\hfill
  \quad\hbox{#1}}
\theoremstyle{remark}
\newtheorem{remark}[Theorem]{Remark}
\numberwithin{Theorem}{section}
\numberwithin{equation}{section}
\newcommand{\dd}{{\, \mathrm{d} }}
\newcommand{\tuborg}[1]{\{ #1 \}}
\DeclareMathAlphabet{\mathpzc}{OMS}{pzc}{m}{it}
\newcommand{\vect}[1]{\pmb{#1}}
\newcommand{\mat}[1]{\boldsymbol{\bm #1}}
\newcommand*\expandableInput[1]{\@@input#1 }
\newcommand{\Rmnum}[1]{\expandafter\@slowromancap\romannumeral #1@}
\tikzset{
    >=stealth',
    punkt/.style={
           rectangle,
           rounded corners,
           draw=black, thick,
           text width=7em,
           minimum height=2em,
           text centered},
    punktl/.style={
           re
           tangle,
           rounded corners,
           draw=black, thick,
           
           text width=7em,
           minimum height=2em,
           text centered},
    pil/.style={
           ->,
           shorten <=4pt,
           shorten >=4pt,},
    pildotted/.style={
           ->,
           shorten <=4pt,
           shorten >=4pt,
  dotted,
  }
}
\title[Estimating absorption time distributions]{
Estimating absorption time distributions of general Markov jump processes}
\author{Jamaal Ahmad}
\address{Department of Mathematical Sciences, University of Copenhagen, Universitetsparken 5, DK-2100 Copenhagen \O, Denmark.}
\email{jamaal@math.ku.dk}
\author{Martin Bladt}
\address{Department of Actuarial Science, Faculty of Business and Economics, University of Lausanne, UNIL-Dorigny, 1015 Lausanne, Switzerland.}
\email{martin.bladt@unil.ch}
\author{Mogens Bladt}
\address{Department of Mathematical Sciences, University of Copenhagen, Universitetsparken 5, DK-2100 Copenhagen \O, Denmark.}
\email{bladt@math.ku.dk}
\begin{document}

\maketitle 
\addtocounter{footnote}{4} 

\begin{center}
{\sc Abstract}
\end{center}
{\small

The estimation of absorption time distributions of Markov jump processes is an important task in various branches of statistics and applied probability. While the time-homogeneous case is classic, the time-inhomogeneous case has recently received increased attention due to its added flexibility and advances in computational power. However, commuting sub-intensity matrices are assumed, which in various cases limits the parsimonious properties of the resulting representation. This paper develops the theory required to solve the general case through maximum likelihood estimation, and in particular, using the expectation-maximization algorithm. A reduction to a piecewise constant intensity matrix function is proposed in order to provide succinct representations, where a parametric linear model binds the intensities together. Practical aspects are discussed and illustrated through the estimation of notoriously demanding theoretical distributions and real data, from the perspective of matrix analytic methods.

\vspace{5mm}

\textbf{Keywords:} Time-inhomogeneous Markov jump process; Inhomogeneous phase-type distribution; Parametric inference; EM algorithm; Poisson regression.   

\vspace{5mm}

\textbf{2020 Mathematics Subject Classification:} 60J28, 62M05, 91G05, 91G70.

\textbf{JEL Classification:} 
C46, C63, G22. 
}

\section{Introduction}
In this paper, we consider statistical estimation of distributions which are absorption times of general Markov jump processes, also known as inhomogeneous phase-type distributions (IPH). The data are the absorption times generated by independent samples of Markov jump processes until absorption, though the path is not observed. Thus, the incompleteness of the data is attended by  
an expectation-maximization (EM) algorithm, which allows for an effective maximum likelihood estimation.  For practical purposes, we consider and implement the important special case where the underlying transition rates are piecewise constant. 

Though time-inhomogeneous Markov jump processes have been classically used in many contexts, IPHs were only formally introduced in \cite{Albrecher-Bladt-2019} as the distribution of the absorption times in a time-inhomogeneous Markov jump process taking values on a finite state space where one state is absorbing and the remaining transient. They are a generalization of the classic phase-type distributions (PH), where the underlying Markov jump process is time-homogeneous (see e.g.\ \cite{BladtNielsen} for an overview of the latter). These distributions may be used in situations where modeling tail behaviors different from the exponential, like e.g. heavy tails, is a concern, cf.\ the examples in \cite{Albrecher-Bladt-2019}, where a subclass consisting of IPHs generated by intensity matrices which are given in terms of a single matrix scaled by some real non--negative function is considered. Within this subclass, the intensity matrices commute over time and thereby provide a link to the corresponding time-homogeneous PH distributions in terms of a parameter-dependent transformation. In this special case, the theory significantly simplifies and allows for more direct analysis.\ This is, for example, the case regarding statistical estimation, where \cite{Albrecher-Bladt-Yslas-2020} develops an EM algorithm based on the parameter-dependent transformation so that the main engine basically uses the conventional EM algorithm known from PH fitting in \cite{AsmussenEM}. 

Since IPHs are absorption times of time-inhomogeneous Markov jump processes, they may naturally also be used for modeling processes that conceptually can be represented as evolving through states, e.g.
in multi-state Markovian life insurance models (see e.g. \cite{hoem69, norberg1991}) where states (phases) relate to the different conditions of a policyholder in a time-dependent manner. This time-dependence would in general require non-commutative intensity matrices to provide meaningful models.
Somewhat related, \cite{albrecher2020mortality} considers mortality modeling using IPHs, including age and time effects, though only the subclass of commuting matrices is examined here.

For time-inhomogeneous Markov jump processes, parametric modeling and maximum likelihood estimation of its transition rates based on the associated multivariate counting process is well-established in the literature; see e.g.\ \cite{Andersen} for an overview. By assuming piecewise constant transition rates on a time grid (as an approximation),  these methods are known to reduce to  Poisson regressions based on aggregated occurrences and exposures in the different time intervals, cf.\ e.g.\ \cite[Section 5]{Aalen2008}.\ This connection is particularly important in situations with aggregated data pooled into periodic intervals, like yearly observations. For example Poisson regression based on yearly observations is used in the Danish FSA's benchmark model for mortality risk, considered in \cite[Appendix 1]{JarnerMoller}, which is implemented in Danish life insurance and pension companies.  

In this paper we extend the statistical fitting of IPHs from \cite{Albrecher-Bladt-Yslas-2020} to the general class of IPHs, using these well-established techniques for parametric inference of time-inhomogeneous Markov jump processes as starting point; they constitute our (unobserved) complete data framework that generates the observations of IPHs and for which an EM algorithm is developed. This is in contrast to the approach in \cite{Albrecher-Bladt-Yslas-2020}, where the underlying homogeneous PH observations are seen as the building blocks. The general setting is, consequently, not reducible to the homogeneous case, and a non-trivial extension of the algorithm is required. In particular, the E-step is abstractly stated in terms of solutions of some differential equations, referred to as product integrals (see \cite{GillJohansen, Johansen}), and the M-step involves numerical optimization.  

Similarly to the completely observed data case, we identify the simplifications that arise in our EM algorithm from assuming piecewise constant transition rates on a time grid, whereby the E-step can be stated in terms of products of matrix exponentials to calculate a set of expected occurrences and exposures, and the M-step can be stated as performing maximum likelihood estimation in Poisson regressions akin to those of \cite[Section 5]{Aalen2008}.\ This fully explicit algorithm allows for computational simplifications similar to those obtained in the complete data case and incurs increased computational performance while retaining flexibility.  We also implement this algorithm and show some numerical examples of mortality modeling of Danish lifetimes as well as examples of fitting to theoretical distributions, confirming that the class of models does not suffer from some of the drawbacks that usual matrix analytic methods have. Another reason for allowing for different intensity matrices in different regions of the support is more pragmatic since it allows for fitting data that traditionally requires higher order IPHs. This could, e.g., be multi-modal data or skewed data. In such cases, we may obtain adequate fits in a discretized model of a much lower dimension.

One additional extension of our model appears during the M-step since the classic EM algorithm of \cite{AsmussenEM} has an explicit solution (number of jumps divided by total time spent in states; the so-called occurrence/exposure rates), while in our case we require parametrization of the transition rates to perform the required Poisson regressions. The canonical parametrization consisting of an intercept agrees with the simpler explicit solution. Fortunately, the added computational burden is low since standard software deals with generalized linear models in a stable and effective manner.

The remainder of the paper is structured as follows. In Section \ref{sec:model}, we recall the inhomogeneous phase-type distribution (IPH). Then, in Section \ref{sec:stat}, we start out with an exposition of parametric inference of time-inhomogeneous Markov jump process, which will constitute the complete data case. Subsequently, we tackle the incomplete data problem and develop EM algorithms for general IPHs and those with piecewise constant transition rates. In Section \ref{sec:homo}, we consider an approach to a strong approximation of IPHs with piecewise constant transition using PH distributions, which may be useful for when a homogeneous representation is required. Section \ref{sec:numerical} is then devoted to numerical examples of our results. Finally, in Section \ref{sec:ext}, we present some possible extensions of our model, including a case where a pre-specified tail behavior is required.

\section{Inhomogeneous phase-type distributions}\label{sec:model}
Let $X = \{X(t)\}_{t\geq 0}$ be a time-inhomogeneous Markov jump process taking values on the finite state space $E = \{1,\ldots,p,p+1\}$, $p\in \mathbb{N}$, where the states $\{1,\ldots,p\}$ are transient and state $p+1$ is absorbing. Denote by $\bm{\alpha} = (\bm{\pi},0)= (\pi_1,\ldots,\pi_p,0)$ the initial distribution of $X$, and $\mat{\Lambda}(t) = \{\mu_{ij}(t)\}_{i,j\in E}$ the intensity matrix of $X$.\ The intensity matrix $\mat{\Lambda}(t)$ is then on the form  
\begin{align*}
\bm{\Lambda}(t) = \begin{pmatrix}
\bm{T}(t) & \bm{t}(t) \\
0 & 0
\end{pmatrix},
\end{align*}
where $\bm{T}(t)$ is the sub-intensity matrix function describing transitions between the transient states, and $\bm{t}(t) = -\bm{T}(t)\bm{e}$ consists of the transition rates to the absorbing state. Let $\tau$ denote the time until absorption of $X$, i.e.\
\begin{align*}
\tau = \inf\{t\geq 0 \ : \ X(t) = p+1\}.
\end{align*} 
Following \cite{Albrecher-Bladt-2019}, we then say that $\tau$ is inhomogeneous phase-type distributed (IPH) with representation $(\bm{\pi}, \bm{T}(\cdot))$, and we write $\tau \sim \mathrm{IPH}(\bm{\pi}, \bm{T}(\cdot))$.\ 

The transition probability matrix $\mat{P}(s,t) = \{p_{ij}(s,t)\}_{i,j\in E}$ of $X$, with elements 
\begin{align*}
p_{ij}(s,t) = \mathbb{P}\!\left(\left. X(t) = j \, \right| X(s) = i\right)\!,
\end{align*}
is given in terms of the product integral of the transition intensity matrix (see \cite{GillJohansen, Johansen}):
\begin{align*}
\mat{P}(s,t) = \Prodi_s^t \!\left(\mat{I}+\mat{\Lambda}(u)\dd u\right) = \begin{pmatrix}
\mat{\bar{P}}(s,t) & \vect{e}-\mat{\bar{P}}(s,t)\vect{e}\\
\vect{0} & 1 
\end{pmatrix},
\end{align*}
where $\mat{\bar{P}}(s,t) = \{p_{ij}(s,t)\}_{i,j\in \{1,\ldots,p\}}$ is the transition (sub-)probability matrix between the transient states,
\begin{align}\label{eq:P_bar}
\mat{\bar{P}}(s,t) = \Prodi_s^t \!\left(\mat{I}+\mat{T}(u)\dd u\right)\!,
\end{align}
and $\mat{e} = (1,1,\ldots,1)'$.\ 

Together with the initial distribution $\mat{\pi}$, this gives the density and survival function of $\tau$ (see \cite[Theorem 2.2]{Albrecher-Bladt-2019}) as
\begin{align}\label{eq:IPH_dens}
f_\tau(x) &= \mat{\pi}\bar{\mat{P}}(0,x)\mat{t}(x),\\[0.2 cm]\label{eq:IPH_surv}
\bar{F}_{\tau}(x) &= \mat{\pi}\bar{\mat{P}}(0,x)\mat{e}.
\end{align}

In this paper, we consider the statistical fitting of IPHs based on independent observations. Although in \cite{Albrecher-Bladt-Yslas-2020} an expectation-maximization (EM) algorithm was devised for the case where $\bm{T}(t) = \lambda(t)\bm{T}$ (for parametric $\lambda(t)$ intensity functions), which implies that $\bm{T}(t)$ commute for different $t$, no statistical model where $\bm{T}(\cdot)$ are non-commutative has been considered in the literature. This is a drawback of significant concern for certain applications, which we seek to remedy in this paper as our main contribution; we provide a general EM algorithm and implement it in the case of a piecewise constant intensity matrix function. 

\subsection{IPHs with piecewise constant intensity matrices}\label{subsec:IPH_piecewise}
We now consider a discretization of the time axis, where in each sub-interval, a different constant intensity matrix is defined. The purpose of this specification is two-fold. First, we seek to provide a statistical methodology for the non-commutative case, which will ease the fitting of heterogeneous data with lower matrix dimensions than previously considered. Second, and perhaps less obvious, is the generalization of discretized non-matrix versions of our model, which require a large number of intervals to provide a satisfactory approximation to the behavior of real data. In this context, the introduction of matrix parameters will allow for more flexible interpolation within sub-intervals, reducing the mesh size of the discretization.

Construct a grid $ s_0=0<s_1<\cdots< s_{K-1}<\infty=s_{K}$, so that $\tau\sim \mbox{IPH}(\vect{\pi},\mat{T}(\cdot))$, where
\begin{align}\label{eq:T_piecewise}
\mat{T}(s) = \mat{T}_{k} = \left\{\mu_{ij}^k\right\}_{i,j=1,\ldots,p}, \ \ s\in (s_{k-1},s_{k}], \quad k=1,\dots, K,
\end{align}
and introducing  $k(x)$ as the unique $k\in \{1,\ldots,K\}$ satisfying that $x\in (s_{k-1},s_k]$, then the product integral formula \eqref{eq:P_bar} for the (sub-)probability matrix between the transient states reduces to a product of matrix exponentials:
\begin{align*}
\bar{\mat{P}}(s,t) = {\rm e}^{\mat{T}_{k(s)}\left(s_{k(s)}-s\right)}\!\left(\prod_{\ell = k(s)+1}^{k(t)-1}{\rm e}^{\mat{T}_{\ell}(s_{\ell}-s_{\ell-1})} \right)  {\rm e}^{\mat{T}_{k(t)}\left(t-s_{k(t)-1}\right)},
\end{align*} 
with the convention that the empty product equals the identity matrix. The density \eqref{eq:IPH_dens} and survival function \eqref{eq:IPH_surv} then in particular reduces to: 
\begin{align*}
\bar{F}_\tau(x) &= \vect{\pi}\left(\prod_{\ell=1}^{k(x)-1}{\rm e}^{\mat{T}_{\ell}(s_{\ell}-s_{\ell-1})} \right){\rm e}^{\mat{T}_{k(x)}\left(x-s_{k(x)-1}\right)}\vect{e}, \\[0.5 cm]
f_\tau(x) &= \vect{\pi}\left(\prod_{\ell=1}^{k(x)-1}{\rm e}^{\mat{T}_{\ell}\left(s_{\ell}-s_{\ell-1}\right)} \right){\rm e}^{\mat{T}_{k(x)}\left(x-s_{k(x)-1}\right)}\vect{t}_{k(x)}.
 \end{align*}
These expressions may be regarded as discrete approximations to their corresponding product integral expressions of the general case but have the advantage of being computationally much lighter to evaluate. Indeed, algorithms for computing the exponential of a matrix are varied and efficient, while product integration must be computed by numerically solving differential equations of increased complexity.

The density of  $\tau$ may be discontinuous at the interval endpoints, which define the constant matrices. Indeed, consider e.g.\ $f_\tau(s_1-)$ and $f_\tau(s_1+)$. Since the matrix exponential is continuous, we have that
\[  f_\tau(s_1-)=  \lim_{\epsilon\downarrow 0} \vect{\pi}{\rm e}^{\mat{T}_1(s_1-\epsilon)}\vect{t}_0  = \vect{\pi}{\rm e}^{\mat{T}_0s_1}\vect{t}_0  \]
while
\[  f_\tau(s_1+) = \lim_{\epsilon\downarrow 0} \vect{\pi}{\rm e}^{\mat{T}_0s_1} {\rm e}^{\mat{T}_1\epsilon}\vect{t}_1 =  \vect{\pi}{\rm e}^{\mat{T}_0s_1}\vect{t}_1 . \]
Hence $f_\tau(s_1-)$ and $f_\tau(s_1+)$ may differ if $\vect{t}_0\neq \vect{t}_1$, and similarly for all the other grid points. On the other hand, if all $\vect{t}_k=\vect{t}$ then the density for $\tau$ is continuous. Similarly, a sufficient condition for differentiability at all points is that $-\mat{T}_k^2\vect{e}$ does not depend on $k$.

\section{Estimation}\label{sec:stat}
This section introduces the main contribution of the paper, namely the maximum-likelihood estimation of general IPHs through the expectation-maximization (EM) algorithm, with a special emphasis on the case of piecewise constant transition rates.

We proceed sequentially: first, the completely observed case is reviewed; second,  the incomplete data setting is built using the estimators from the previous case; finally, a simplified algorithm with piecewise constant transition rates is presented.  

\subsection{The  complete data case}
We now review some methods known from the inference of time-inhomogeneous Markov jump processes on finite state spaces based on complete observations of its trajectories. We refer to \cite{Andersen} for a detailed exposition on this.\ 

Suppose that we observe $N\in \N$ i.i.d.\ realizations of the time-inhomogeneous Markov jump process $X$ on some time interval $[0,T]$, where $T>0$ is a given and fixed time horizon; represent the data by $\vect{X} = (X^{(1)},\ldots,X^{(N)})$.\ Denote by $\mat{N} = (N^{(1)},\ldots, N^{(N)})$ the corresponding data of the multivariate counting process, where $N^{(n)}$, $n=1,\ldots,N$, have components
\begin{align*}
N^{(n)}_{ij}(t)=\# \tuborg{s\in (0,t] : X^{(n)}(s-)=i, \ X^{(n)}(s)=j}.
\end{align*}
Parametrizing the transition rates 
with a parameter vector $\bm{\theta}\in \bm{\Theta}$, where $\bm{\Theta}$ is some finite-dimensional, parameter space with non-empty interior, such that, 
\begin{align*}
\bm{T}(s) &= \bm{T}(s; \bm{\theta}),
\end{align*}
we have that the likelihood function for the joint parameter $(\vect{\pi},\vect{\theta})$ is given by
\begin{align}\label{eq:complete_likelihood_general}
\begin{split}
\mathcal{L}^{\vect{X}}(\vect{\pi},\vect{\theta}) &= \mathcal{L}^{\vect{X}}_0(\vect{\pi})\prod_{i,j\in E\atop j\neq i}\mathcal{L}_{ij}^{\vect{X}}(\vect{\theta}),    \\[0.2 cm]
\mathcal{L}^{\vect{X}}_0(\vect{\pi})&=\prod_{i=1}^{p}\pi_i^{B_i},\\[0.2 cm]
\mathcal{L}_{ij}^{\vect{X}}(\vect{\theta})&=\prod_{n=1}^N\exp\!\bigg(\int_{(0,T]} \log\!\left(\mu_{ij}(s;\bm{\theta})\right)\!\dd  N^{(n)}_{ij}(s)-\int_0^T I^{(n)}_i(s) \mu_{ij}(s;\bm{\theta})\dd  s    \bigg),
\end{split}
\end{align}
where, for $i\in E$ and $n\in \{1,\ldots,N\}$,
\begin{align}\label{eq:Quantities_complete_general}
I^{(n)}_i(s) =  \mathds{1}_{(X^{(n)}(s) = i)} \qquad \text{and} \qquad  
B_i = \sum_{n=1}^N I^{(n)}_i(0).
\end{align}
Here, $I^{(n)}_i(s)$ indicates if the $n$'th observation has a sojourn in state $i$ at time $s$, and $B_i$ denotes the total number of observations with initial state $i$;\ only the latter can be aggregated over observations due to the initial distribution not having a time-dependency.\ 

The corresponding log-likelihood $L^{\vect{X}}(\vect{\pi},\vect{\theta}) = \log \mathcal{L}^{\vect{X}}(\vect{\pi}, \vect{\theta})$ then takes form 
\begin{align}\nonumber
L^{\vect{X}}(\vect{\pi},\vect{\theta}) &=L^{\vect{X}}_0(\vect{\pi}) + \sum_{i,j\in E \atop j\neq i} L^{\vect{X}}_{ij}(\vect{\theta}),\\[0.2 cm] \label{eq:complete_log-likelihood_general} 
L^{\vect{X}}_0(\vect{\pi}) &=  \log \mathcal{L}^{\vect{X}}_0(\vect{\pi}) = \sum_{i = 1}^p B_i\log(\pi_i), \\[0.2 cm]\nonumber
L^{\vect{X}}_{ij}(\vect{\theta}) &= \log \mathcal{L}^{\vect{X}}_{ij}(\vect{\theta})=\sum_{n=1}^N\bigg(\int_{(0,T]} \log\!\left(\mu_{ij}(s;\bm{\theta})\right)\!\dd  N^{(n)}_{ij}(s)-\int_0^T I^{(n)}_i(s) \mu_{ij}(s;\bm{\theta})\dd  s    \bigg),
\end{align} 
from which we obtain the MLE of $(\vect{\pi},\vect{\theta})$:
\begin{align*}
(\hat{\vect{\pi}},\hat{\vect{\theta}}) = \underset{(\vect{\pi},\vect{\theta})}{\mathrm{arg \, max}}\, L^{\vect{X}}(\vect{\pi},\vect{\theta}).
\end{align*}
The product structure of the likelihood \eqref{eq:complete_likelihood_general} (equivalently the additive structure of the log-likelihood) in $\vect{\pi}$ and $\vect{\theta}$ via $\mathcal{L}_0^{\vect{X}}$ respectively $\mathcal{L}_{ij}^{\vect{X}}$, $i,j\in E$, $j\neq i$, enables us to estimate these separately. Regarding $\vect{\pi}$, we may note (or confirm by direct calculation) that the likelihood $\mathcal{L}^{\vect{X}}_0$ is proportional to the likelihood obtained from viewing $(B_1,\ldots,B_p)$ as an observation from the $\mathrm{Multinomial}(N,\vect{\pi})$-distribution, where $N$ is considered fixed. This gives a closed-form expression for the MLE: 
\begin{align*}
\hat{\pi}_i = \frac{B_i}{N}.
\end{align*} 
For $\vect{\theta}$, a closed form expression for the MLE is not available in general, and numerical methods for the optimization  
\begin{align*}
\hat{\vect{\theta}} = \underset{\vect{\theta}}{\mathrm{arg \, max}}\, \sum_{i,j\in E\atop j\neq i}L^{\vect{X}}_{ij}(\vect{\theta})
\end{align*}
are required. 
 
\subsection{The complete data case with piecewise constant transition rates}\label{subsec:complete_data_piecewise} 
We now assume that the transition rates $\mu_{ij}(\cdot ; \vect{\theta})$ are piecewise constant on the form \eqref{eq:T_piecewise}.\ The likelihood \eqref{eq:complete_likelihood_general} then simplifies to
\begin{align}\label{eq:complete_likelihood_piecewise}
\mathcal{L}^{\vect{X}}(\vect{\pi},\vect{\theta}) &=  \prod_{i=1}^{p}\pi_i
^{B_i}\prod_{k=1}^K\prod_{i,j\in E \atop j\neq i } \mu^k_{ij}(\vect{\theta})^{O_{ij}(k)}\exp\!\left(-E_i(k)\mu^k_{ij}( \vect{\theta})\right)\!, 
\end{align}
where $O_{ij}(k)$ is the total number of \textit{occurrences} of transitions from state $i$ to $j$ in the time interval $(s_{k-1},s_{k}]$, and $E_{i}(k)$ is the total time spent in state $i$ in the time interval $(s_{k-1},s_{k}]$, the so-called \textit{local exposure}:
\begin{align}
\begin{split}\label{eq:OE}
O_{ij}(k) &= \sum_{n=1}^N \int_{(s_{k-1},s_k]} \dd N_{ij}^{(n)}(t) , \\[0.2 cm]
E_{i}(k) &= \sum_{n=1}^N \int_{s_{k-1}}^{s_{k}} I^{(n)}_i(t) \dd  t.
\end{split}
\end{align}
\begin{remark}
The likelihood \eqref{eq:complete_likelihood_piecewise} can be seen to reduce to the likelihood considered in \cite{AsmussenEM} by having $K=1$ (corresponding to homogeneity) and no parametrization of the transition rates.\demoo
\end{remark}
  
Thus, in the case of piecewise constant transition rates, the occurrences and exposures in the different time intervals, along with the number of initiations in the different states, \begin{align*}
\left\{\left(B_i,\, O_{ij}(k),\, E_{i}(k)\right)\!,\quad  k=1,\ldots,K, \ \ i,j\in E, \ j\neq i\right\} 
\end{align*}
are sufficient statistics. In fact, the resulting likelihood \eqref{eq:complete_likelihood_piecewise} is proportional to the likelihood obtained from independent observations 
\begin{align}\label{eq:stikprover_complete_general}
\begin{split}
&\left(B_1,\ldots,B_p\right)\!, \\[0.2 cm]
&\left(O_{ij}(k),\quad  k=1,\ldots,K, \ \ i,j\in E, \ j\neq i\right)\!, 
\end{split}
\end{align}
where 
\begin{align}\label{eq:regr_complete_general}
\begin{split}
\left(B_1,\ldots,B_p\right) \quad &  \text{is} \quad \mathrm{Multinomial}(N, \bm{\pi}
) - \text{distributed},\\[0.2 cm]
O_{ij}(k) \quad  & \text{is} \quad \mathrm{Poisson}\!\left(E_{i}(k)\mu^k_{ij}(\vect{\theta})\right) - \text{distributed}, 
\end{split}
\end{align}
with $N$ and $E_i(k)$ considered fixed. Consequently, the MLE of $\vect{\pi}$ is (still) given by
\begin{align*}
\hat{\pi}_i = \frac{B_i}{N}, 
\end{align*}
while the MLE of $\vect{\theta}$ is obtained from Poisson regressions of the occurrences against the different times on the grid, which can be carried out using standard software packages. For example, if  $\mu_{ij}^k(\vect{\theta})$ is an exponential function in $\vect{\theta}$, a Poisson regression with log-link function and log-exposure as offsets can be carried out, corresponding to the fitting of the models:
\begin{align}\label{eq:poisson_glm}
\log(\mu_{ij}(s; \vect{\theta}))= \log(E_i)+ \theta_{ij}^{(1)} +\theta^{(2)}_{ij}\cdot f^{(2)}(s),
\end{align}
for some suitable known function $f^{(2)}$, with a common choice being the identity. The predictions at $s_k$ and at unit exposure are then the estimates of the transition rates, $\mu_{ij}^k(\hat{\vect{\theta}})$.

In the case where the parameters in $\vect{\theta}$ act as the (unknown) piecewise constant transition rates themselves, i.e.\ $\vect{\theta} = (\theta_{ij}^k)_{k=1,\ldots,K,\, i,j\in E,\,  j\neq i}$ so that
\begin{align*}
\mu^k_{ij}(\vect{\theta}) &= \theta_{ij}^k,
\end{align*} 
the MLE of $\vect{\theta}$ simplifies to so-called occurrence--exposure rates:
\begin{align*}
\hat{\theta}_{ij}^k = \frac{O_{ij}(k)}{E_{i}(k)}.
\end{align*} 
This is a special case where transition rates are estimated directly in a ``non-parametric'' way and can be retrieved by considering the $s_k$ as a categorical (instead of numeric) variable in \eqref{eq:poisson_glm}. The assumption of piecewise constant transition rates is often seen as an approximation to the general continuous versions obtained when the number of grid points tends to infinity. However, the resulting estimated models may be favorable even for coarser grid mesh sizes.

\subsection{EM algorithm for IPHs}
Suppose that we observe $N$ i.i.d.\ realizations of IPHs with representation $(\vect{\pi}, \mat{T}(\cdot; \vect{\theta}))$ and represent the data by the vector $\mat{\tau} = (\tau^{(1)},\ldots,\tau^{(N)})$.\ The data $\mat{\tau}$ is then considered as incomplete data of the whole Markov jump process $X$ on $[0,T]$, where $T=\max_{n=1,\ldots,N} \tau^{(n)}$, and we employ an EM algorithm to estimate the parameter $(\vect{\pi}, \vect{\theta})$ based on the complete data likelihood considered in the previous subsections.\ 

Let $\mathbb{E}_{(\vect{\pi},\vect{\theta})}$ denote the expectation under which the Markov jump process $X$ has sub-intensity matrices $\mat{T}(\cdot ; \vect{\theta})$ and initial distribution $\vect{\pi}$.\ The EM algorithm for estimation of $(\vect{\pi},\vect{\theta})$ then consists of initializing with some value $(\vect{\pi}^{(0)},\vect{\theta}^{(0)})\in [0,1]^{p+1}\times \mat{\Theta}$, and then iteratively compute the conditional expected log-likelihood given the incomplete data $\vect{\tau}$ under the current parameter values $(\vect{\pi}^{(m)},\vect{\theta}^{(m)})$, known as the E-step,
\begin{align}\label{eq:likelihood_E-step}
(\vect{\pi},\vect{\theta})\mapsto \bar{L}^{(m)}(\vect{\pi},\vect{\theta}) = \mathbb{E}_{(\vect{\pi}^{(m)},\vect{\theta}^{(m)} )}\!\left[\left. L^{\vect{X}}(\vect{\pi},\vect{\theta})\, \right| \vect{\tau}\right],\quad m\in \mathbb{N}_0, 
\end{align}
and then update the parameters to $(\vect{\pi}^{(m+1)},\vect{\theta}^{(m+1)})$ by maximizing $\bar{L}^{(m)}$, known as the M-step. For notational convenience, we write, under some parameter $(\vect{\pi}, \vect{\theta})$, 
\begin{align}\label{eq:P_bar_parameter}
\bar{\mat{P}}(s,t; \vect{\theta}) = \Prodi_s^t \!\left(\mat{I}+\mat{T}\big(u;\vect{\theta}\big)\!\dd u\right)
\end{align}
for the transition (sub-)probability matrix in the transient states, and 
\begin{align}\label{eq:dens_parameter}
\begin{split}
f(x; \vect{\pi}, \vect{\theta}) &= \vect{\pi}\bar{\mat{P}}(0,x; \vect{\theta})\vect{t}(x;\vect{\theta}),\\[0.2 cm]
\vect{t}(x; \vect{\theta}) & = -\mat{T}\big(x;\vect{\theta}\big)\vect{e},
\end{split} 
\end{align}    
for the corresponding density. To derive the conditional expected log-likelihood \eqref{eq:likelihood_E-step}, we essentially need the distribution of the Markov jump process conditional on its absorption time. This is obtained in the following lemma.  
\begin{lemma}\label{lem:dist}
Let $X = \{X(s)\}_{s\geq 0}$ be a time-inhomogeneous Markov jump process taking values on $E$ with sub-intensity matrix function $\mat{T}(\cdot \, ; \vect{\theta})$ and initial distribution $\vect{\pi}$.\ Let $\tau \sim \mathrm{IPH}(\vect{\pi}, \mat{T}(\cdot \, ; \vect{\theta}))$ be its corresponding absorption time. The conditional process $$Y(s) \overset{d}{=} X(s)\big|\tau, \quad s\in [0,\tau),$$ is then a time-inhomogeneous Markov jump process taking values on $\{1,\ldots,p\}$ with initial distribution 
\begin{align*}
\widetilde{\pi}_i(\tau; \vect{\pi}, \vect{\theta}) = \frac{\pi_i\vect{e}'_i\bar{\mat{P}}(0,\tau; \vect{\theta})\vect{t}(\tau; \vect{\theta}) }{\vect{\pi}\bar{\mat{P}}(0,\tau; \vect{\theta})\vect{t}(\tau; \vect{\theta})},
\end{align*}
transition probabilities
\begin{align*}
\widetilde{p}_{ij}(t,s|\tau; \vect{\theta}) = \frac{\vect{e}_i'\bar{\mat{P}}(t,s; \vect{\theta})\vect{e}_j\vect{e}_j'\bar{\mat{P}}(s,\tau; \vect{\theta})\vect{t}(\tau; \vect{\theta})  }{
\vect{e}_i'\bar{\mat{P}}(t,\tau; \vect{\theta})\vect{t}(\tau; \vect{\theta})
},
\end{align*}
and transition intensities
\begin{align*}
\widetilde{\mu}_{ij}(t|\tau; \vect{\theta}) = \mu_{ij}(t; \vect{\theta})\frac{
\vect{e}_j'\bar{\mat{P}}(t,\tau; \vect{\theta})\vect{t}(\tau; \vect{\theta})  }{
\vect{e}_i'\bar{\mat{P}}(t,\tau; \vect{\theta})\vect{t}(\tau; \vect{\theta})  
}.
\end{align*}
 
\end{lemma}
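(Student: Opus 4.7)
The proof strategy is the classical Doob $h$-transform argument applied to the pair (Markov jump process, absorption time). The guiding observation is that conditioning a Markov process on an event depending only on the absorption time amounts to a time-dependent reweighting of the transition intensities by the ratio of the likelihoods of the remaining future absorbing at $\tau$.

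First I would establish the Markov property of $Y$. Fix $0\le t_0 < t_1 < \cdots < t_n \le \tau$ and states $i_0,\dots,i_n\in\{1,\dots,p\}$. Using the Markov property of $X$ and conditioning on $\{X(t_0)=i_0,\dots,X(t_n)=i_n\}$, the joint density of $(X(t_0),\dots,X(t_n),\tau)$ factors as
\begin{align*}
\pi_{i_0}\,\vect{e}'_{i_0}\bar{\mat{P}}(t_0,t_1;\vect{\theta})\vect{e}_{i_1}\cdots \vect{e}'_{i_{n-1}}\bar{\mat{P}}(t_{n-1},t_n;\vect{\theta})\vect{e}_{i_n}\,\vect{e}'_{i_n}\bar{\mat{P}}(t_n,\tau;\vect{\theta})\vect{t}(\tau;\vect{\theta}),
\end{align*}
while the denominator $f_\tau(\tau)=\vect{\pi}\bar{\mat{P}}(0,\tau;\vect{\theta})\vect{t}(\tau;\vect{\theta})$ does not depend on the intermediate states. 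Dividing numerator by denominator and regrouping the factors associated with consecutive increments shows that the conditional law of $(X(t_0),\dots,X(t_n))\,|\,\tau$ has a product structure whose one-step factors depend only on the adjacent pair of states and times, yielding the Markov property.

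Second I would read off the three pieces of data from this factorization. The initial distribution comes from taking $n=0$ at $t_0=0$:
\begin{align*}
\widetilde{\pi}_i(\tau;\vect{\pi},\vect{\theta})
=\frac{\pi_i\,\vect{e}'_i\bar{\mat{P}}(0,\tau;\vect{\theta})\vect{t}(\tau;\vect{\theta})}{\vect{\pi}\bar{\mat{P}}(0,\tau;\vect{\theta})\vect{t}(\tau;\vect{\theta})},
\end{align*}
using the IPH density from \eqref{eq:IPH_dens} in both numerator (conditional on $X(0)=i$) and denominator. For the transition probabilities, taking $n=1$ with $(t_0,t_1)=(t,s)$ and $(i_0,i_1)=(i,j)$, and dividing by the analogous expression with $i_1$ summed out, i.e.\ the marginal density $\pi_i\,\vect{e}'_i\bar{\mat{P}}(t,\tau;\vect{\theta})\vect{t}(\tau;\vect{\theta})$ conditional on $X(t)=i$, gives the stated formula for $\widetilde{p}_{ij}(t,s\,|\,\tau;\vect{\theta})$.

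Finally, the conditional transition intensities are obtained by differentiation: for $i\neq j$,
\begin{align*}
\widetilde{\mu}_{ij}(t\,|\,\tau;\vect{\theta})=\lim_{h\downarrow 0}\frac{\widetilde{p}_{ij}(t,t+h\,|\,\tau;\vect{\theta})}{h}.
\end{align*}
Writing $\vect{e}'_i\bar{\mat{P}}(t,t+h;\vect{\theta})\vect{e}_j=\mu_{ij}(t;\vect{\theta})h+o(h)$ (standard for time-inhomogeneous Markov chains), and using the continuity of $s\mapsto \vect{e}'_j\bar{\mat{P}}(s,\tau;\vect{\theta})\vect{t}(\tau;\vect{\theta})$ at $s=t$ in the numerator, together with the fact that the denominator $\vect{e}'_i\bar{\mat{P}}(t,\tau;\vect{\theta})\vect{t}(\tau;\vect{\theta})$ is independent of $h$, yields the intensity formula.

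The main obstacle I anticipate is the Markov property step; some care is needed because $\tau$ is a functional of the whole trajectory on $[0,\tau]$, not an $\mathcal{F}_{t_n}$-measurable event. The remedy is precisely the factorization above, which shows that the likelihood of the segment $(t_n,\tau]$ given $X(t_n)=i_n$ depends on the past only through $i_n$; this is the $h$-transform essence and is the only conceptually non-trivial part. After that, the computation of initial distribution, transition probabilities, and intensities is a matter of Bayes' rule and standard local expansion.
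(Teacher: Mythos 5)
Your proposal is correct and follows essentially the same route as the paper: both factorize the joint law of the path positions and $\tau$ via the Markov property of $X$, apply Bayes' rule with the IPH density $\vect{\pi}\bar{\mat{P}}(0,\tau;\vect{\theta})\vect{t}(\tau;\vect{\theta})$, and recover the intensities from $\lim_{h\downarrow 0}\widetilde{p}_{ij}(t,t+h|\tau;\vect{\theta})/h$ using the local expansion of $\vect{e}_i'\bar{\mat{P}}(t,t+h;\vect{\theta})\vect{e}_j$ and continuity of the product integral. The only cosmetic difference is that you verify the Markov property of the conditioned process through an explicit multi-time factorization, while the paper conditions on $\mathcal{F}^X(t)$ and differentiates the conditional survival function in the absorption variable -- the same $h$-transform computation in different clothing.
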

\begin{proof}
Let $j\in \{1,\ldots,p\}$ and $t,s\geq 0$ such that $0\leq t\leq s < \tau$ be given. Then it follows from the law of iterated expectations and the Markov property of $X$ that, for $y>s$, we get the conditional survival probability 
\begin{align*}
\mathbb{E}_{(\vect{\pi},\vect{\theta})}\!\left[\left. \mathds{1}_{(X(s) = j)} \mathds{1}_{(\tau > y)}\, \right| \mathcal{F}^X(t)\right]  
&=\mathbb{E}_{(\vect{\pi},\vect{\theta})}\!\left[\left. \mathds{1}_{(X(s) = j)} \mathbb{E}_{(\vect{\pi},\vect{\theta})}\!\left[\left. \mathds{1}_{(\tau > y)}\, \right| \mathcal{F}^{X}(s)  \right]\, \right| \mathcal{F}^X(t)\right] \\[0.2 cm]
&=\mathbb{E}_{(\vect{\pi},\vect{\theta})}\!\left[\left. \mathds{1}_{(X(s) = j)} 
\vect{e}_{X(s)}'\mat{\bar{P}}(s,y; \vect{\theta})\vect{e}\, \right| \mathcal{F}^X(t) \right] \\[0.2 cm]
&=\vect{e}'_{X(t)}\mat{\bar{P}}(t,s; \vect{\theta})\vect{e}_j\vect{e}_{j}'\mat{\bar{P}}(s,y; \vect{\theta})\vect{e}, 
\end{align*}
from which obtain the transition probabilities for $Y$: 
\begin{align*}
\mathbb{E}_{(\vect{\pi},\vect{\theta})}\!\left[\left. \mathds{1}_{(Y(s) = j)} \, \right| \mathcal{F}^Y(t)\right]&=\mathbb{E}_{(\vect{\pi},\vect{\theta})}\!\left[\left. \mathds{1}_{(X(s) = j)} \, \right| \mathcal{F}^X(t)\vee \sigma(\tau)\right] \\[0.2 cm]
&= \frac{
-\frac{\partial}{\partial y} \left(\vect{e}'_{X(t)}\mat{\bar{P}}(t,s; \vect{\theta})\vect{e}_j\vect{e}_{j}'\mat{\bar{P}}(s,y; \vect{\theta})\vect{e}\right)\!\Big|_{y=\tau} 
}{
f(\tau; \vect{e}_{X(t)}', \vect{\theta}) 
} \\[0.2 cm]
&=\frac{\vect{e}_{X(t)}'\bar{\mat{P}}(t,s; \vect{\theta})\vect{e}_j\vect{e}_j'\bar{\mat{P}}(s,\tau; \vect{\theta})\vect{t}(\tau; \vect{\theta})  }{
\vect{e}_{X(t)}'\bar{\mat{P}}(t,\tau; \vect{\theta})\vect{t}(\tau; \vect{\theta})
} \\[0.2 cm]
&=\widetilde{p}_{X(t)j}(t,s|\tau; \vect{\theta}),
\end{align*}
which by conditioning on $Y(t) = i$, $i\in \{1,\ldots,p\}$, (which implies $X(t)=i$) yields the desired result. For the corresponding transition intensities, we get by definition of these,  
\begin{align*}
\widetilde{\mu}_{ij}(t|\tau; \vect{\theta}) &= \lim_{h\downarrow 0}\frac{\widetilde{p}_{ij}(t,t+h|\tau; \vect{\theta})}{h} \\[0.2 cm]
&= \frac{1}{
\vect{e}_i'\bar{\mat{P}}(t,\tau; \vect{\theta})\vect{t}(\tau; \vect{\theta})
}
\lim_{h\downarrow 0} \frac{ 
\vect{e}_i'\bar{\mat{P}}(t,t+h; \vect{\theta})\vect{e}_j}{h} \vect{e}_j'\bar{\mat{P}}(t+h,\tau; \vect{\theta})\vect{t}(\tau; \vect{\theta}) \\[0.2 cm]
&=\frac{1}{
\vect{e}_i'\bar{\mat{P}}(t,\tau; \vect{\theta})\vect{t}(\tau; \vect{\theta})
}
\mu_{ij}(t; \vect{\theta}) \vect{e}_j'\bar{\mat{P}}(t,\tau; \vect{\theta})\vect{t}(\tau; \vect{\theta}),
\end{align*}
where we use the continuity of the transition (sub-)probability matrix (that is, continuity of product integrals) in the last equality.    
\end{proof}
\begin{remark}
In \cite{hoem69b, norberg1991}, similar conditional distributions as those of Lemma \ref{lem:dist} are derived. While they consider conditional distributions given future states, we consider conditional distributions given the time of absorption, which is a slight extension in which we include (particularly simple) future jump times in the conditioning.  \demoo
\end{remark}
For $n\in \{1,\ldots,N\}$, $s\in (0,\tau^{(n)}]$ and $i,j\in E$, $j\neq i$, define the conditional expected statistics under the parameters $(\vect{\pi}^{(m)}, \vect{\theta}^{(m)})$, $m\in \N_0$,
\begin{align}\label{eq:cond_sufficient_statistics_general}
\begin{split}
\bar{B}_i^{(m)} &= \mathbb{E}_{(\vect{\pi}^{(m)},\, \vect{\theta}^{(m)})}\!\left[\left. B_i \, \right|  \vect{\tau} \right]\!,\\[0.2 cm]
\bar{I}_i^{(n,m)}(s) &= \mathbb{E}_{(\vect{\pi}^{(m)},\, \vect{\theta}^{(m)})}\!\left[\left. I^{(n)}_i(s) \, \right|  \vect{\tau} \right]\!,\\[0.2 cm]
\bar{N}_{ij}^{(n,m)}(s) &= \mathbb{E}_{(\vect{\pi}^{(m)},\, \vect{\theta}^{(m)})}\!\left[\left. N^{(n)}_{ij}(s) \, \right|  \vect{\tau} \right]\!. 
\end{split}
\end{align}
We then obtain the conditional expected log-likelihood in the following result. 
\begin{theorem}\label{thm:E-step_general}
The conditional expected log-likelihood given the data $\vect{\tau}$ under the parameters $(\vect{\pi}^{(m)},\vect{\theta}^{(m)})$, $m\in \N_0$, is given by 
\begin{align*}
\bar{L}^{(m)}(\vect{\pi},\vect{\theta}) &= \bar{L}_0^{(m)}(\vect{\pi}) + \sum_{i,j\in E \atop j\neq i } \bar{L}_{ij}^{(m)}(\vect{\theta}), \\[0.2 cm]
\bar{L}_0^{(m)}(\vect{\pi}) &= \sum_{i=1}^p \bar{B}^{(m)}_i\log(\pi_i), \\[0.3 cm]
\bar{L}_{ij}^{(m)}(\vect{\theta}) &= \sum_{n=1}^N   \bigg(\int_{0}^{\tau^{(n)}} \log\!\left(\mu_{ij}(s;\bm{\theta})\right)\!\dd  \bar{N}^{(n,m)}_{ij}(s)-\int_0^{\tau^{(n)}} \bar{I}^{(n,m)}_i(s) \mu_{ij}(s;\bm{\theta})\dd  s    \bigg), 
\end{align*} 
with all the non-zero conditional expected statistics given by, for $i,j\in\{1,\ldots,p\}$, $j\neq i$, and $s\in (0,\tau^{(n)}]$, $n\in \{1,\ldots,N\}$,
\begin{align*}
\bar{B}_i^{(m)} &= \sum_{n=1}^N \frac{
\pi_i^{(m)}\vect{e}_i'\mat{\bar{P}}\big(0,\tau^{(n)}; \vect{\theta}^{(m)}\big)\mat{t}\big(\tau^{(n)}; \vect{\theta}^{(m)}\big) 
}{
f\big(\tau^{(n)}; \vect{\pi}^{(m)}, \vect{\theta}^{(m)}\big)
}, \\[0.4 cm]
\bar{I}^{(n,m)}_i(s) &=  \frac{
\vect{\pi}^{(m)}\mat{\bar{P}}\big(0,s; \vect{\theta}^{(m)}\big)\,\mat{e}_i\mat{e}_i'\,\mat{\bar{P}}\big(s,\tau^{(n)}; \vect{\theta}^{(m)}\big)\mat{t}\big(\tau^{(n)};\vect{\theta}^{(m)}\big)  
}{
f\big(\tau^{(n)}; \vect{\pi}^{(m)}, \vect{\theta}^{(m)}\big)
}, \\[0.4 cm]
\dd \bar{N}^{(n,m)}_{ij}(s) &=  
\frac{
 \vect{\pi}^{(m)}\mat{\bar{P}}\big(0,s; \vect{\theta}^{(m)})\,\mat{e}_i\mu_{ij}(s;\vect{\theta}^{(m)})\mat{e}_j'\,\mat{\bar{P}}\big(s,\tau^{(n)};\vect{\theta}^{(m)}\big)\mat{t}\big(\tau^{(n)};\vect{\theta}^{(m)}\big)   
}{
f\big(\tau^{(n)}; \vect{\pi}^{(m)}, \vect{\theta}^{(m)}\big)
}\dd s,
\end{align*}
and, for $j=p+1$, 
\begin{align*}
\dd \bar{N}^{(n,m)}_{i,p+1}(s) = \frac{
\vect{\pi}^{(m)}\mat{\bar{P}}\big(0,s; \vect{\theta}^{(m)}\big)\,\vect{e}_it_i\big(s;\vect{\theta}^{(m)}\big) 
}{
f\big(\tau^{(n)}; \vect{\pi}^{(m)}, \vect{\theta}^{(m)}\big)
}\dd \varepsilon_{\tau^{(n)}}(s),
\end{align*}
where $\varepsilon_{\tau^{(n)}}$ is the Dirac measure in $\tau^{(n)}$. 
\end{theorem}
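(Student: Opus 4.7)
The plan is to obtain the decomposition of $\bar L^{(m)}$ by linearity of the conditional expectation applied to the complete-data log-likelihood in \eqref{eq:complete_log-likelihood_general}, and then compute each of the three conditional expected statistics in \eqref{eq:cond_sufficient_statistics_general} using Lemma \ref{lem:dist}. Since $L^{\vect{X}}$ splits additively into the initial term $L^{\vect{X}}_0$ depending on the $B_i$ and the transition terms $L^{\vect{X}}_{ij}$ depending on the integrals against $N^{(n)}_{ij}$ and $I^{(n)}_i$, and since taking conditional expectation commutes with these integrals (by Fubini, as the integrands $\log(\mu_{ij}(s;\vect{\theta}))$ and $\mu_{ij}(s;\vect{\theta})$ are deterministic and locally bounded on $[0,\tau^{(n)}]$), the stated form of $\bar L^{(m)}$ is immediate once the three conditional statistics are identified.

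For the initial counts, I would use that under $(\vect{\pi}^{(m)},\vect{\theta}^{(m)})$ the conditional law of $X^{(n)}(0)$ given $\tau^{(n)}$ is precisely the initial distribution $\widetilde{\pi}_i(\tau^{(n)};\vect{\pi}^{(m)},\vect{\theta}^{(m)})$ given in Lemma \ref{lem:dist}; summing over $n$ and recognising the denominator as $f(\tau^{(n)};\vect{\pi}^{(m)},\vect{\theta}^{(m)})$ from \eqref{eq:dens_parameter} gives $\bar B_i^{(m)}$. For the expected sojourn indicator $\bar I^{(n,m)}_i(s)$ I would observe that independence across samples reduces the conditioning to $\tau^{(n)}$ alone, and then compute $\mathbb{P}(Y^{(n)}(s)=i\mid \tau^{(n)})$ by combining the initial distribution and transition probabilities of Lemma \ref{lem:dist}; cancellations in the chain yield the compact ratio with $\vect{\pi}^{(m)}\mat{\bar P}(0,s;\vect{\theta}^{(m)})\vect{e}_i\vect{e}_i'\mat{\bar P}(s,\tau^{(n)};\vect{\theta}^{(m)})\vect{t}(\tau^{(n)};\vect{\theta}^{(m)})$ in the numerator. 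For $j\in\{1,\dots,p\}$, the conditional compensator of $N^{(n)}_{ij}$ given $\tau^{(n)}$ is $\bar I^{(n,m)}_i(s)\,\widetilde\mu_{ij}(s\mid \tau^{(n)};\vect{\theta}^{(m)})\dd s$, so multiplying the two expressions from Lemma \ref{lem:dist} and cancelling the common factor $\vect{e}_i'\mat{\bar P}(s,\tau^{(n)};\vect{\theta}^{(m)})\vect{t}(\tau^{(n)};\vect{\theta}^{(m)})$ yields the stated formula for $\dd\bar N^{(n,m)}_{ij}(s)$.

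The main obstacle is the case $j=p+1$, because the transition to the absorbing state is not an ordinary interior jump of $Y^{(n)}$: conditional on $\tau^{(n)}$, it occurs deterministically at time $s=\tau^{(n)}$, so the conditional compensator has an atom there rather than a density against Lebesgue measure. I would handle this by computing $\mathbb{E}_{(\vect{\pi}^{(m)},\vect{\theta}^{(m)})}[N^{(n)}_{i,p+1}(t)\mid \tau^{(n)}]$ directly: it equals $\mathbb{P}(X^{(n)}(\tau^{(n)}-)=i\mid \tau^{(n)})\mathds{1}_{(t\geq \tau^{(n)})}$, and the Bayes identity
\[
\mathbb{P}\!\left(X^{(n)}(\tau^{(n)}-)=i\,\big|\,\tau^{(n)}\right)=\frac{\vect{\pi}^{(m)}\mat{\bar P}(0,\tau^{(n)};\vect{\theta}^{(m)})\vect{e}_i\, t_i(\tau^{(n)};\vect{\theta}^{(m)})}{f(\tau^{(n)};\vect{\pi}^{(m)},\vect{\theta}^{(m)})}
\]
follows from writing the joint ``density'' of $(X^{(n)}(\tau^{(n)}-),\tau^{(n)})$ as the $i$-th summand of the decomposition $f(\tau;\vect{\pi},\vect{\theta})=\sum_i \vect{\pi}\mat{\bar P}(0,\tau;\vect{\theta})\vect{e}_i t_i(\tau;\vect{\theta})$. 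Expressing the resulting point mass through the Dirac measure $\varepsilon_{\tau^{(n)}}$ then gives the claimed formula.
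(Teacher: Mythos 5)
Your proposal is correct and follows essentially the same route as the paper's proof: the additive decomposition of $\bar L^{(m)}$ via linearity of conditional expectation and Fubini, then Lemma \ref{lem:dist} to identify $\bar B_i^{(m)}$, $\bar I_i^{(n,m)}(s)$ and $\dd\bar N_{ij}^{(n,m)}(s)$ through the conditional initial distribution, transition probabilities and intensities, with the same cancellations. The only (minor) deviation is the case $j=p+1$, where you obtain $\mathbb{P}(X^{(n)}(\tau^{(n)}-)=i\mid\tau^{(n)})$ by a direct Bayes decomposition $f(\tau;\vect{\pi},\vect{\theta})=\sum_i\vect{\pi}\bar{\mat{P}}(0,\tau;\vect{\theta})\vect{e}_i t_i(\tau;\vect{\theta})$, whereas the paper reaches the same quantity through $\widetilde p_{\ell i}(0,\tau^{(n)}|\tau^{(n)};\vect{\theta}^{(m)})$ and continuity of the product integral; both give the identical atom at $\tau^{(n)}$ expressed via $\varepsilon_{\tau^{(n)}}$.
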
   
\begin{proof} 
It follows from the complete data log-likelihood \eqref{eq:complete_log-likelihood_general}, that the conditional expected log-likelihood \eqref{eq:likelihood_E-step} is given by
\begin{align*}
\bar{L}^{(m)}(\vect{\pi},\vect{\theta}) = \bar{L}_0^{(m)}(\vect{\pi}) + \sum_{i,j \in E \atop j\neq i}^p \bar{L}_{ij}^{(m)}(\vect{\theta}),
\end{align*}
where, for $i,j\in E$, $j\neq i$,   
\begin{align}\label{eq:cond_log-likelihood_expr}
\begin{split}
\bar{L}_0^{(m)}(\vect{\pi}) &= \mathbb{E}_{(\vect{\pi}^{(m)},\, \vect{\theta}^{(m)})}\!\left[\left. L_0(\vect{\pi})\, \right| \vect{\tau}\right] =   \sum_{i=1}^p \bar{B}_i^{(m)}\log(\pi_i), \\[0.2 cm]
\bar{L}_{ij}^{(m)}(\vect{\theta}) &= \mathbb{E}_{(\vect{\pi}^{(m)},\, \vect{\theta}^{(m)})}\!\left[\left. L_{ij}(\vect{\theta})\, \right| \vect{\tau}\right] \\[0.2 cm]
&=\sum_{n=1}^N   \bigg(\int_{(0,\tau^{(n)}]} \log\!\left(\mu_{ij}(s;\bm{\theta})\right)\!\dd  \bar{N}^{(n,m)}_{ij}(s)-\int_0^{\tau^{(n)}} \bar{I}^{(n,m)}_i(s) \mu_{ij}(s;\bm{\theta})\dd  s    \bigg), 
\end{split}
\end{align}
where we have used Fubini's theorem in the last equality. To compute the conditional expectations appearing in \eqref{eq:cond_log-likelihood_expr}, we get, by independence of the elements in $\vect{\tau}$ and Lemma \ref{lem:dist}, that for $i\in \{1,\ldots,p\}$,
\begin{align*}
\bar{B}_i^{(m)} =  \sum_{n=1}^N \mathbb{E}_{(\vect{\pi}^{(m)},\, \vect{\theta}^{(m)})}\!\left[\left. \mathds{1}_{(X^{(0)}(s) = i)} \, \right|  \tau^{(n)} \right] = \sum_{n=1}^N \widetilde{\pi}_i(\tau^{(n)}; \vect{\pi}^{(m)}, \vect{\theta}^{(m)}),
\end{align*}
which by insertion provides the desired expression. For $\bar{I}_i^{(n,m)}$, we get 
\begin{align*}
\bar{I}_i^{(n,m)}(s) &= \mathbb{E}_{(\vect{\pi}^{(m)},\, \vect{\theta}^{(m)})}\!\left[\left. \mathds{1}_{(X^{(n)}(s) = i)} \, \right|  \tau^{(n)} \right] \\[0.2 cm]
&= \sum_{\ell = 1}^p \widetilde{\pi}_\ell(\tau^{(n)}; \vect{\pi}^{(m)}, \vect{\theta}^{(m)})\widetilde{p}_{\ell i}(0,s|\tau^{(n)}; \vect{\theta}^{(m)}) \\[0.2 cm]
&=\sum_{\ell = 1}^p \frac{
\pi^{(m)}_\ell\vect{e}_\ell'\bar{\mat{P}}(0,s; \vect{\theta}^{(m)})\vect{e}_i\vect{e}_i'\bar{\mat{P}}(s,\tau^{(n)}; \vect{\theta}^{(m)})\vect{t}(\tau^{(n)}; \vect{\theta}^{(m)})
}
{
\vect{\pi}^{(m)}\bar{\mat{P}}(0,\tau^{(n)}; \vect{\theta}^{(m)})\vect{t}(\tau^{(n)}; \vect{\theta}^{(m)})
}  \\[0.2 cm]
&=\frac{
\vect{\pi}^{(m)}\bar{\mat{P}}(0,s; \vect{\theta}^{(m)})\vect{e}_i\vect{e}_i'\bar{\mat{P}}(s,\tau^{(n)}; \vect{\theta}^{(m)})\vect{t}(\tau^{(n)}; \vect{\theta}^{(m)})
}
{
\vect{\pi}^{(m)}\bar{\mat{P}}(0,\tau^{(n)}; \vect{\theta}^{(m)})\vect{t}(\tau^{(n)}; \vect{\theta}^{(m)})
}.
\end{align*}
For $\bar{N}_{ij}^{(n,m)}$, $j\in \{1,\ldots,p\}$, $j\neq i$, we proceed similarly, using the intensity process of $\{X^{(n)}(s)\}_{s<\tau^{(n)}}|\tau^{(n)}$ from Lemma \ref{lem:dist}, to get
\begin{align*}
\bar{N}_{ij}^{(n,m)}(s) &= \mathbb{E}_{(\vect{\pi}^{(m)},\, \vect{\theta}^{(m)})}\!\left[\left. \int_{(0,s]}   \dd N^{(n)}_{ij}(u) \, \right|  \tau^{(n)} \right]\\[0.2 cm]
 &= \mathbb{E}_{(\vect{\pi}^{(m)},\, \vect{\theta}^{(m)})}\!\left[\left. \int_0^s   1_{(X^{(n)}(u) = i)}\widetilde{\mu}_{ij}(u|\tau^{(n)}; \vect{\theta}^{(m)})\dd u \, \right|  \tau^{(n)} \right] \\[0.2 cm]
 &= \int_0^s   \sum_{\ell=1}^p \widetilde{\pi}_\ell(\tau^{(n)}; \vect{\pi}^{(m)}, \vect{\theta}^{(m)}) \widetilde{p}_{\ell i}(0,u|\tau^{(n)}; \vect{\theta}^{(m)})  \widetilde{\mu}_{ij}(u|\tau^{(n)}; \vect{\theta}^{(m)})\dd u\\[0.2 cm]
&=\int_0^s \frac{
\vect{\pi}^{(m)}\bar{\mat{P}}(0,u; \vect{\theta}^{(m)})\vect{e}_i\mu_{ij}(u; \vect{\theta}^{(m)})\vect{e}_j'\bar{\mat{P}}(u,\tau^{(n)}; \vect{\theta}^{(m)})\vect{t}(\tau^{(n)}; \vect{\theta}^{(m)})
}
{
\vect{\pi}^{(m)}\bar{\mat{P}}(0,\tau^{(n)}; \vect{\theta}^{(m)})\vect{t}(\tau^{(n)}; \vect{\theta}^{(m)})
}\dd u,
\end{align*}
for which we take the dynamics in $s$ to arrive at the desired result. Finally, for $j=p+1$, we may note that   $N_{i,p+1}^{(n)}$ can be written as 
\begin{align*}
N_{i,p+1}^{(n)}(s) = \mathds{1}_{(s\geq \tau^{(n)})}\mathds{1}_{(X^{(n)}(\tau^{(n)}-) = i)},
\end{align*}
and so, using the same techniques as for the above quantities,
\begin{align*}
\bar{N}^{(n,m)}_{i,p+1}(s) &= \mathbb{E}_{(\vect{\pi}^{(m)},\vect{\theta}^{(m)})}\!\left[\left.
\mathds{1}_{(s\geq \tau^{(n)})}\mathds{1}_{(X^{(n)}(\tau^{(n)}-) = i)}\, \right| \tau^{(n)}\right] \\[0.2 cm]
&=\mathds{1}_{(s\geq \tau^{(n)})}\sum_{\ell=1}^p \widetilde{\pi}_\ell\big(\tau^{(n)}; \vect{\pi}^{(m)}, \vect{\theta}^{(m)}\big)\widetilde{p}_{\ell i}\big(0,\tau^{(n)}|\tau^{(n)}; \vect{\theta}^{(m)}\big) \\[0.2 cm]
&= \mathds{1}_{(s\geq \tau^{(n)})}\sum_{\ell=1}^p
\frac{\pi^{(m)}_\ell\vect{e}_\ell'\mat{\bar{P}}\big(0,\tau^{(n)}; \vect{\theta}^{(m)}\big)\vect{e}_i\vect{e}_i'\mat{\bar{P}}\big(\tau^{(n)},\tau^{(n)}; \vect{\theta}^{(m)}\big)\vect{t}\big(\tau^{(n)}; \vect{\theta}^{(m)}\big)
}{
\vect{\pi}^{(m)}\bar{\mat{P}}(0,\tau^{(n)}; \vect{\theta}^{(m)})\vect{t}(\tau^{(n)}; \vect{\theta}^{(m)})
}\\[0.2 cm]
&=\mathds{1}_{(s\geq \tau^{(n)})}
\frac{\vect{\pi}^{(m)}\mat{\bar{P}}\big(0,\tau^{(n)}; \vect{\theta}^{(m)}\big)\vect{e}_it_i\big(\tau^{(n)}; \vect{\theta}^{(m)}\big)
}{
\vect{\pi}^{(m)}\bar{\mat{P}}(0,\tau^{(n)}; \vect{\theta}^{(m)})\vect{t}(\tau^{(n)}; \vect{\theta}^{(m)})
},
\end{align*}   
where we use the continuity of product integrals in the second equality. Taking the dynamics in $s$ now yields the desired result.  
\end{proof}
The result shows that developing an EM algorithm for general IPH distributions significantly increases the computational complexity compared with the homogeneous case \cite{AsmussenEM} as well as the commuting inhomogeneous cases \cite{Albrecher-Bladt-Yslas-2020}.  \ Indeed, since we no longer have a set of sufficient statistics for the different states and transitions,  we must in the E-step compute the conditional expected log-likelihood $\bar{L}^{(m)}_{ij}$ directly. Evaluating this in a parameter $\vect{\theta}\in \mat{\Theta}$ involves a collection of product integral calculations, as opposed to matrix exponential calculations known from the two existing algorithms. Also, the subsequent M-step is no longer explicit with simple expressions, which is inherited from the fact that the complete data MLE is not explicit in general, and numerical optimization methods are therefore required to carry out the M-step. 

As one may note from Subsection \ref{subsec:IPH_piecewise} and \ref{subsec:complete_data_piecewise}, the above mentioned computational complexities can be remedied by assuming piecewise constant transition rates on the form \eqref{eq:T_piecewise}. We shall therefore assume this in the following to obtain our main algorithm and corresponding numerical examples; for completeness, we still provide the general EM algorithm in Appendix \ref{apA}, since different simplifications may be drawn from the general case in the future.

Consider the complete data likelihood \eqref{eq:complete_likelihood_piecewise} in the case of piecewise constant transition rates, and recall the sufficient statistics \eqref{eq:OE} for the different states and transitions. Since the corresponding log-likelihood is linear in these sufficient statistics,
\begin{align*}
\log \mathcal{L}^{\vect{X}}(\vect{\pi}, \vect{\theta}) &=  \sum_{i=1}^{p}B_i\log(\pi_i)+\sum_{k=1}^K\sum_{i,j\in E \atop j\neq i }\left( O_{ij}(k)\log(\mu_{ij}^k(\vect{\theta}))-E_i(k)\mu^k_{ij}(\vect{\theta})\right)\!, 
\end{align*}
the E-step for the transitions simplifies so that it now suffices to compute the following conditional expected sufficient statistics, for $k=1,\ldots,K$, 
\begin{align}\label{eq:cond_sufficient_statistics}
\begin{split}
\bar{B}^{(m)}_{i}  &= \mathbb{E}_{(\vect{\pi}^{(m)}, \vect{\theta}^{(m)})}\!\left[\left. B_{i}\, \right| \, \mat{\tau}\right]\!, \\[0.2 cm]
\bar{E}^{(m)}_{i}(k)  &= \mathbb{E}_{(\vect{\pi}^{(m)}, \vect{\theta}^{(m)})}\!\left[\left. E_{i}(k)\, \right| \, \mat{\tau}\right]\!, \\[0.2 cm]
\bar{O}^{(m)}_{ij}(k) &= \mathbb{E}_{(\vect{\pi}^{(m)}, \vect{\theta}^{(m)})}\!\left[\left. O_{ij}(k)\, \right| \, \mat{\tau}\right]\!, 
\end{split}
\end{align}
and then the M-step for updating $\vect{\theta}$ simplifies to the Poisson regression mentioned in Subsection \ref{subsec:complete_data_piecewise},  but where the occurrences and exposures are replaced by their conditional expectations computed in the E-step. 

Based on Theorem \ref{thm:E-step_general} for the general cases, we immediately obtain these conditional expectations in Corollary  \ref{theo:cond_exp} below. For notational convenience, we let $k^{(n)} = k(\tau^{(n)})$ denote the place on the grid that the $n$'th observation lies in, and, for $k_1,k_2\in \{1,\ldots,K\}$, $k_2\geq k_1$, we define 
\begin{align}\label{def:A}
\mat{A}(k_1,k_2; \vect{\theta}) = \prod_{\ell = k_1}^{k_2} {\rm e}^{\mat{T}_\ell(\vect{\theta})(s_\ell - s_{\ell - 1})}.
\end{align} 
Then the (sub-)probability matrix in the transient states \eqref{eq:P_bar_parameter} under some parameter $(\vect{\pi},\vect{\theta})$ as well as the corresponding density \eqref{eq:dens_parameter} can be written as  
\begin{align}\label{eq:P_bar_parameter_piecewise}
\begin{split}
\bar{\mat{P}}(s,t; \vect{\theta}) &= {\rm e}^{\mat{T}_{k(s)}(\vect{\theta})\left(s_{k(s)}-s\right)}\mat{A}(k(s)+1,k(t)-1; \vect{\theta})\, {\rm e}^{\mat{T}_{k(t)}(\vect{\theta})\left(t-s_{k(t)-1}\right)},\\[0.3 cm]
f(x; \vect{\pi}, \vect{\theta}) &= \vect{\pi}\mat{A}(1,k(x)-1; \vect{\theta})\vect{t}_{k(x)}(\vect{\theta}).
\end{split} 
\end{align}    
\newpage
\begin{corollary}\label{theo:cond_exp}
Suppose that the sub-intensity matrix function $\mat{T}$ is piecewise constant on the form \eqref{eq:T_piecewise}. Then the conditional expected sufficient statistics \eqref{eq:cond_sufficient_statistics} are given by, for $i,j\in \{1,\ldots,p\}$, $j\neq i$,  
\begin{align*}
\bar{B}^{(m)}_{i}  &= \sum_{n=1}^N 
\frac{\pi_i^{(m)}\vect{e}_i'\bar{\mat{P}}(0,\tau^{(n)}; \vect{\theta}^{(m)})\mat{t}_{k^{(n)}}\big(\vect{\theta}^{(m)}\big)}
{
f\big(\tau^{(n)}; \vect{\pi}^{(m)}, \vect{\theta}^{(m)}\big)
}
\end{align*}
\begin{align*}
\bar{E}^{(m)}_{i}(k)  &= \sum_{n=1}^N \frac{
\displaystyle\int_{s_{k-1}\wedge \tau^{(n)}  }^{s_k\wedge \tau^{(n)} } 
\vect{\pi}\bar{\mat{P}}(0,u; \vect{\theta}^{(m)})\mat{e}_i\mat{e}_i'\bar{\mat{P}}(u,\tau^{(n)}; \vect{\theta}^{(m)})\mat{t}_{k^{(n)}}\big(\vect{\theta}^{(m)}\big)\!\dd u
}
{
f\big(\tau^{(n)}; \vect{\pi}^{(m)}, \vect{\theta}^{(m)}\big)
}
\end{align*}
\begin{align*}
\bar{O}^{(m)}_{ij}(k) &= \sum_{n=1}^N \frac{ \displaystyle\int_{s_{k-1}\wedge \tau^{(n)}  }^{s_k\wedge \tau^{(n)} } \vect{\pi}\bar{\mat{P}}(0,u; \vect{\theta}^{(m)})\mat{e}_i\mu^k_{ij}(\vect{\theta}^{(m)})\mat{e}_j'\bar{\mat{P}}(u,\tau^{(n)}; \vect{\theta}^{(m)})\mat{t}_{k^{(n)}}\big(\vect{\theta}^{(m)}\big)\!\dd u}
{
f\big(\tau^{(n)}; \vect{\pi}^{(m)}, \vect{\theta}^{(m)}\big)
}, \\[0.5 cm]
\bar{O}^{(m)}_{i,p+1}(k) &= \sum_{n=1}^N \mathds{1}_{(\tau^{(n)}\in (s_{k-1},s_k])}\frac{
\mat{\pi}\bar{\mat{P}}(0,\tau^{(n)}; \vect{\theta}^{(m)})\mat{e}_i\vect{e}_i'\vect{t}_{k^{(n)}}(\vect{\theta}^{(m)}) }{
f\big(\tau^{(n)}; \vect{\pi}^{(m)}, \vect{\theta}^{(m)}\big)
},
\end{align*}
with $\bar{\mat{P}}$ and $f$ given as in \eqref{eq:P_bar_parameter_piecewise}. 
\end{corollary}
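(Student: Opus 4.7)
The plan is to derive each formula as a direct specialization of Theorem \ref{thm:E-step_general} to the piecewise constant setting, exploiting the representation \eqref{eq:P_bar_parameter_piecewise} of the transition (sub-)probability matrix as products of matrix exponentials.

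First I would rewrite the piecewise constant sufficient statistics in a form that aligns with the quantities appearing in Theorem \ref{thm:E-step_general}. Since $I_i^{(n)}(t) = 0$ for $t > \tau^{(n)}$ and $N_{ij}^{(n)}$ has no mass beyond absorption, the exposure and occurrence statistics in \eqref{eq:OE} admit the equivalent representations
\begin{align*}
E_i(k) &= \sum_{n=1}^N \int_{s_{k-1}\wedge \tau^{(n)}}^{s_k \wedge \tau^{(n)}} I_i^{(n)}(t)\dd t, \\
O_{ij}(k) &= \sum_{n=1}^N \int_{(s_{k-1}\wedge \tau^{(n)},\, s_k \wedge \tau^{(n)}]} \dd N_{ij}^{(n)}(t), \qquad j\in\{1,\ldots,p\},
\end{align*}
and, for $j=p+1$, $O_{i,p+1}(k) = \sum_n \mathds{1}_{(\tau^{(n)} \in (s_{k-1},s_k])} \mathds{1}_{(X^{(n)}(\tau^{(n)}-)=i)}$. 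Taking conditional expectations, Fubini's theorem lets me exchange the sum/integral with the conditional expectation and plug in the formulas for $\bar{B}_i^{(m)}$, $\bar{I}_i^{(n,m)}(s)$, $\dd\bar{N}_{ij}^{(n,m)}(s)$ and $\dd\bar{N}_{i,p+1}^{(n,m)}(s)$ from Theorem \ref{thm:E-step_general}.

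Next, I would simplify the resulting expressions using the piecewise constant structure. Since $\tau^{(n)}\in(s_{k^{(n)}-1},s_{k^{(n)}}]$, the exit rate vector satisfies $\vect{t}(\tau^{(n)};\vect{\theta}^{(m)}) = -\mat{T}_{k^{(n)}}(\vect{\theta}^{(m)})\vect{e} = \vect{t}_{k^{(n)}}(\vect{\theta}^{(m)})$, and the product integrals $\bar{\mat{P}}$ reduce to the matrix exponential products in \eqref{eq:P_bar_parameter_piecewise}. For the occurrence count with $j\in\{1,\ldots,p\}$, inside the interval $(s_{k-1}\wedge\tau^{(n)}, s_k\wedge\tau^{(n)}]$ we moreover have $\mu_{ij}(u;\vect{\theta}^{(m)}) = \mu_{ij}^k(\vect{\theta}^{(m)})$, which is constant in $u$ and may be left inside (or pulled out of) the integral. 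Likewise, the density in the denominator is just $f(\tau^{(n)};\vect{\pi}^{(m)},\vect{\theta}^{(m)})$ as given in \eqref{eq:P_bar_parameter_piecewise}.

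For $\bar{O}_{i,p+1}^{(m)}(k)$, I would start from the Dirac-measure expression for $\dd\bar{N}_{i,p+1}^{(n,m)}(s)$, which integrated over $(s_{k-1},s_k]$ yields a nonzero contribution only on the event $\tau^{(n)}\in(s_{k-1},s_k]$; on this event the numerator reduces to $\vect{\pi}^{(m)}\bar{\mat{P}}(0,\tau^{(n)};\vect{\theta}^{(m)})\vect{e}_i\, t_i(\tau^{(n)};\vect{\theta}^{(m)}) = \vect{\pi}^{(m)}\bar{\mat{P}}(0,\tau^{(n)};\vect{\theta}^{(m)})\vect{e}_i\vect{e}_i'\vect{t}_{k^{(n)}}(\vect{\theta}^{(m)})$, matching the stated formula. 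The calculation is essentially a book-keeping exercise; the only real point of care — which I would flag as the main subtlety rather than a genuine obstacle — is consistently truncating the integration domain at $\tau^{(n)}$ so that the transient-state integrals do not spuriously pick up contributions from times past absorption, and correctly identifying $k^{(n)}$ when $\tau^{(n)}$ falls exactly on a grid point.
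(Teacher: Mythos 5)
Your proposal is correct and follows essentially the same route as the paper: the published proof simply inserts the occurrence/exposure expressions \eqref{eq:OE} into \eqref{eq:cond_sufficient_statistics} and applies Theorem \ref{thm:E-step_general}, which is exactly your specialization argument. The extra bookkeeping you spell out (truncating the integration range at $\tau^{(n)}$, the constancy of $\mu_{ij}^k$ on each sub-interval, $\vect{t}(\tau^{(n)};\vect{\theta}^{(m)})=\vect{t}_{k^{(n)}}(\vect{\theta}^{(m)})$, and the Dirac-mass contribution for $j=p+1$) is the same content the paper leaves implicit.
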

\begin{proof}
By inserting the expressions for $O_{ij}(k)$ and $E_i(k)$ from \eqref{eq:OE} into \eqref{eq:cond_sufficient_statistics} and using Theorem \eqref{thm:E-step_general}, we obtain the results for $\bar{O}^{(m)}_{ij}(k)$ and $\bar{E}^{(m)}_{i}(k)$.\ For $\bar{B}^{(m)}_{i}$, it follows from a direct application of Theorem \eqref{thm:E-step_general}.  
\end{proof}

By writing out the exact expressions for $\bar{\mat{P}}$ and $f$ given as in \eqref{eq:P_bar_parameter_piecewise}, we end up with Algorithm \ref{alg:IPH}, which by Corollary \ref{theo:cond_exp} produces the required MLE estimation for IPHs with piecewise constant transition rates.     
\begin{algorithm}[!htbp]
\caption{{EM algorithm for IPHs with piecewise constant transition rates}}\label{alg:IPH}
\begin{algorithmic}
\State \textit{\textbf{Input}: Data points $\mat{\tau} = (\tau^{(1)},\ldots,\tau^{(N)})$ and initial parameters $(\vect{\pi}^{(0)}, \vect{\theta}^{(0)})$.}
\begin{enumerate} 
\item[ 0)] Set $m:=0$.
\item[ 1)]\textit{E-step:}\ Compute statistics for states $i,j\in \{1,\ldots,p\}$, $j\neq i$, and grid points $k=1,\ldots,K$, 
{\small\begin{align*}
    \bar{B}^{(m)}_i&=\sum_{n=1}^N\frac{
\pi_i^{(m)}b^{(n,m)}_i(1)}
{\vect{\pi}^{(m)}\mat{b}^{(n,m)}(1)
},\\[0.4 cm]
 \bar{E}^{(m)}_i(k)&=\sum_{n=1}^{N} \frac{
 \vect{e}_i'\mat{C}_k^{(n,m)}\vect{e}_i
  }{
 \vect{\pi}^{(m)}\mat{b}^{(n,m)}(1)
  }, \\[0.4 cm]
\bar{O}^{(m)}_{ij}(k)&=\sum_{n=1}^{N} \mu_{ij}^k\big(\vect{\theta}^{(m)}\big)\frac{
\vect{e}_i'\mat{C}_k^{(n,m)}\vect{e}_j
}{
\vect{\pi}^{(m)}\mat{b}^{(n,m)}(1)
},\\[0.4 cm]
\bar{O}^{(m)}_{i,p+1}(k) &= \sum_{n=1}^N \mathds{1}_{(k^{(n)} = k)}  \frac{a_i^{(n,m)}\vect{e}_i'\vect{t}_k(\vect{\theta}^{(m)}) }
{
\vect{\pi}^{(m)}\mat{b}^{(n,m)}(1)
},
\end{align*}}
where\vspace{0.2cm}
{\small\begin{align*}
\qquad\qquad\mat{a}^{(n,m)}  &= \mat{a}_s^{(m)}(k^{(n)}-1)e^{\mat{T}_{k^{(n)}}\left(\vect{\theta}^{(m)}\right)\left(\tau^{(n)}-s_{k^{(n)}-1}\right)},   \\[0.4 cm]
\qquad\qquad \mat{a}_s^{(m)}(\ell)  &= \vect{\pi}^{(m)}\mat{A}(1,\ell  ; \vect{\theta}^{(m)}),   \\[0.4 cm]
\mat{b}^{(n,m)}(\ell ) &= 
\begin{cases}
\mat{A}\!\left(\ell,k^{(n)} -1  ; \vect{\theta}^{(m)}\right)\!e^{\mat{T}_{k^{(n)}}\left(\vect{\theta}^{(m)}\right)\left(\tau^{(n)}-s_{k^{(n)}-1}\right)}\mat{t}_{k^{(n)}}\!\left(\vect{\theta}^{(m)}\right)\! \qquad &\ell \leq k^{(n)} \\[0.4 cm]
\mat{t}_{k^{(n)}}\!\left(\vect{\theta}^{(m)}\right)\! & \ell > k^{(n)} 
\end{cases},\\[0.4 cm]
\mat{C}_k^{(n,m)} &= \int_{\tau_{|k-1}^{(n)}  }^{\tau_{|k}^{(n)} }\mat{c}_k^{(n,m)}(u)\dd u,\\[0.4 cm]
\mat{c}_k^{(n,m)}(u) &= \displaystyle e^{\mat{T}_k(\vect{\theta}^{(m)})\left(\tau_{|k}^{(n)}-u\right)}\mat{b}^{(n,m)}\big(k^{(n)}\wedge k + 1\big)\mat{a}_s^{(m)}\big(k^{(n)}\wedge k -1\big)e^{\mat{T}_k(\vect{\theta}^{(m)})\left(u-\tau_{|k-1}^{(n)}\right)},\\[0.4 cm]
\tau_{|k}^{(n)} &= s_{k}\wedge \tau^{(n)}.
\end{align*}}\par 
\vspace*{0.1cm}
\item[2)] \textit{M-step:}\ Update the parameters: 
\begin{align*}
\hat{\pi}_i^{(m+1)} &= \frac{\bar{B}^{(m)}_i}{N},\\[0.2 cm]
\qquad\hat{\vect{\theta}}^{(m+1)}\, \mathrm{:}& \ \text{MLE of the  regression} \ \bar{O}^{(m)}_{ij}(k) \sim \mathrm{Pois}\big(\mu^k_{ij}(\vect{\theta})\bar{E}^{(m)}_i(k)\big), \ k=1,\ldots,K
\end{align*}\par 
\vspace*{0.1cm}
\item[3)] Set $m:=m+1$ and GOTO 1), until a stopping rule is satisfied. \vspace*{0.2cm}
\end{enumerate}
    \State \textit{\textbf{Output}: Fitted parameters $ (\hat{\vect{\pi}}, \hat{\vect{\theta}})$.}
\end{algorithmic}
\end{algorithm}
\begin{remark}
To compute the matrix $\mat{C}_k^{(n,m)}$, for fixed $n\in \{1,\ldots,N\}$, $k\in \{1,\ldots,K\}$, and $m\in \mathbb{N}_0$, this involves integrals of matrix exponentials, which may be computationally heavy. However, we can observe that by defining the block matrix
\begin{align*}
\mat{G}_k^{(n,m)} := \begin{pmatrix}
\mat{T}_k(\vect{\theta}^{(m)}) & \mat{b}^{(n,m)}\big(k^{(n)}\wedge k + 1\big)\mat{a}_s^{(m)}\big(k^{(n)}\wedge k -1\big) \\[0.2 cm]
\mat{0} & \mat{T}_k(\vect{\theta}^{(m)})
\end{pmatrix},
\end{align*}
we obtain from \cite{VanLoan} that 
\begin{align*}
{\rm e}^{\mat{G}_k^{(n,m)}\left(\tau_{|k}^{(n)}-\tau_{|k-1}^{(n)}\right)} = \begin{pmatrix}
{\rm e}^{\mat{T}_{k}\left(\vect{\theta}^{(m)}\right)\left(\tau_{|k}^{(n)}-\tau_{|k-1}^{(n)}\right)}   & \mat{C}_k^{(n,m)}\\[0.2 cm]
\mat{0} & {\rm e}^{\mat{T}_{k}\left(\vect{\theta}^{(m)}\right)\left(\tau_{|k}^{(n)}-\tau_{|k-1}^{(n)}\right)}
\end{pmatrix}, 
\end{align*}
which reduces to a single matrix exponential calculation. Similar type of simplifications were noted in \cite[Remark 2]{Albrecher-Bladt-Yslas-2020}.\demoo 
\end{remark}

\section{An approximate homogeneous representation}\label{sec:homo}

In full generality, a phase-type approximation for any distribution is possible through the construction of \cite{JohnsonTaaffe}, where Erlang weights are constructed according to the increments of the target cumulative distribution function. However, when the target distribution arises as an absorption time of an inhomogeneous Markov jump process, recent developments in \cite{bladt2022strongly} provide an alternative pathwise approximation yielding strong approximants which are directly parametrized by the intensity matrix $\mat{\Lambda}$. Since phase-type distributions enjoy explicit formulas which their inhomogeneous counterparts may lack, such an approximation is practically relevant, and thus we outline it below. Section \ref{sec:numerical} presents some numerical examples of such an approximation.  

Combining Theorem 4.2 and Proposition 4.3 in \cite{bladt2022strongly} yields, after some calculations, the following result:
\begin{theorem}[Phase-type approximation]\label{abs_time_theo}
Let $\tau \sim \mbox{IPH}(\vect{\alpha},\mat{T}(s))$ where $\mat{T}(s)$ is given as in \eqref{eq:T_piecewise}. Define $\vect{\alpha}^{(m)}=(\vect{\alpha},\vect{0},\dots,\vect{0})$, and the $mp \times mp$ sub-intensity matrix 
\begin{align}\label{approx_subint_mat}   \mat{T}^{(n,m)}=
\begin{pmatrix}
-n\mat{I} & n \bm{Q}_1^{(n)} & 0 & ... & 0 \\
0 & -n\mat{I} & n \bm{Q}_2^{(n)}  & ... & 0 \\
0 & 0 & -n\mat{I} & ... & 0 \\
\vdots & \vdots & \vdots & \ddots & \vdots\\
0 & 0 & 0 & ... & -n\mat{I}
\end{pmatrix} ,
   \end{align}
where, for $\ell=1,\dots,m-1,$ and $\omega_k(\ell,n)=E(s_k;\ell,n)-E(s_{k-1};\ell,n)$ (here, $E(\cdot;a,b)$ is the Erlang cdf with $a$ stages and rate $b$),
\begin{align}\label{eq:Qnell1_IPH}
\bm{Q}_\ell^{(n)}
=\sum_{k=1}^{K}\omega_k(\ell,n) \mat{T}_k/n + \bm{I}.
\end{align}

Then there exist $\tau^{(n,m)} \sim \mbox{PH}(\vect{\alpha}^{(m)},\mat{T}^{(n,m)})$ such that
\begin{align*}
\lim_{n\to\infty}\lim_{m\to\infty}\mathbb{P}(|\tau-\tau^{(n,m)}|>\epsilon)= 0.
\end{align*}
Moreover, the density of the resulting approximation reduces to
\begin{align}\label{approx_ph_density_eval}
f_{\tau^{(n,m)}}(t)&=\sum_{\ell=1}^{m-1} \left[\vect{\alpha}\bm{Q}^{(n)}_1\cdots\bm{Q}^{(n)}_{\ell-1}(\mat{I}-\bm{Q}_\ell^{(n)})\bfe \right] \frac{t^{\ell-1}}{(\ell-1)!}
n^\ell\exp(-n t)\\
&\quad +\left[\vect{\alpha}\bm{Q}^{(n)}_1\cdots\bm{Q}^{(n)}_{m-1}\bfe \right] \frac{t^{m-1}}{(m-1)!}n^m\exp(-nt).
\end{align}
\end{theorem}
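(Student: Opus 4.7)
The plan is to derive the theorem by specializing the general pathwise strong approximation construction of \cite{bladt2022strongly} to the piecewise constant setting. Their Theorem 4.2 provides a strongly convergent sequence $\tau^{(n,m)}$ of phase-type random variables, with the iterated limit $n\to\infty$ after $m\to\infty$, approximating any IPH absorption time $\tau$; intuitively, one samples the underlying inhomogeneous Markov chain at the ticks of an independent Poisson clock of rate $n$, truncated after $m$ ticks. Their Proposition 4.3 shows that the associated PH representation admits a block bidiagonal sub-intensity, with common block-diagonal $-n\mat{I}$ and super-diagonal blocks $\bm{Q}_\ell^{(n)} = \E[\mat{I} + \mat{T}(S_\ell)/n]$, where $S_\ell \sim \mathrm{Erlang}(\ell,n)$ is the time of the $\ell$-th tick. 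My first step is simply to invoke these two results, which immediately provides both the general shape \eqref{approx_subint_mat} and the convergence statement.

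The second step is to specialize $\bm{Q}_\ell^{(n)}$ to the piecewise constant case \eqref{eq:T_piecewise}. Partitioning on which subinterval $(s_{k-1},s_k]$ contains $S_\ell$, the integrand $\mat{T}(S_\ell)$ collapses to the constant matrix $\mat{T}_k$ on that event, which has probability exactly $\omega_k(\ell,n) = E(s_k;\ell,n) - E(s_{k-1};\ell,n)$. Taking expectations and adding the identity then yields \eqref{eq:Qnell1_IPH} in a single line.

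For the density formula \eqref{approx_ph_density_eval}, the idea is to exploit the structure $\mat{T}^{(n,m)} = -n\mat{I} + n\mat{N}$, where $\mat{N}$ is the block strictly upper triangular matrix carrying the $\bm{Q}_\ell^{(n)}$ on the super-diagonal, so that $\mat{N}^m = \vect{0}$. Since $\mat{I}$ and $\mat{N}$ commute,
\begin{align*}
\exp(\mat{T}^{(n,m)} t) = e^{-nt}\sum_{\ell=0}^{m-1}\frac{(nt)^\ell}{\ell!}\mat{N}^\ell,
\end{align*}
and a short induction shows that the $(1,1+\ell)$ block of $\mat{N}^\ell$ is the ordered product $\bm{Q}_1^{(n)}\cdots\bm{Q}_\ell^{(n)}$. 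Left-multiplication by $\vect{\alpha}^{(m)}$ selects the first block row, while right-multiplication by the PH exit vector $-\mat{T}^{(n,m)}\bfe$, whose $\ell$-th block is $n(\bfe - \bm{Q}_\ell^{(n)}\bfe)$ for $\ell < m$ and $n\bfe$ for $\ell = m$, produces the two sums in \eqref{approx_ph_density_eval} after an index shift $\ell \mapsto \ell + 1$.

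The main obstacle is not conceptual --- both ingredients are essentially inherited from \cite{bladt2022strongly} --- but rather bookkeeping: one must carefully verify that the prefactors $n^\ell$ and $n^m$ in the density emerge correctly from the combination of the $n^\ell$ coming out of $(n\mat{N})^\ell/\ell!$ with the additional factor of $n$ sitting in the exit vector, and that the final relabelling of the summation index matches the indexing convention of \eqref{approx_ph_density_eval}.
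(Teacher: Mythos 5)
Your proposal is correct and follows the same route as the paper, which simply invokes Theorem 4.2 and Proposition 4.3 of \cite{bladt2022strongly} and leaves the remaining "calculations" implicit; your specialization of $\bm{Q}_\ell^{(n)}$ via the Erlang weights and your derivation of \eqref{approx_ph_density_eval} from the nilpotent decomposition $\mat{T}^{(n,m)}=-n\mat{I}+n\mat{N}$, including the bookkeeping of the exit-vector blocks and the index shift, is exactly the omitted computation. Nothing further is needed.
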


\begin{remark}\rm
The above approximation is very computationally efficient. Indeed, the $\bm{Q}_\ell^{(n)}$ only vary across $\ell$ and $n$ through the scalar Erlang weights $\omega_k(\ell,n)$. In particular, fast calculation of the Erlang density weights is possible.

One implicit assumption which is relevant when applying the approximation is that $n$ must be large enough to make $\mat{T}^{(n,m)}$ a proper sub-intensity matrix, which depends on the maximal absolute value of the diagonal elements of the $\mat{T}_k$ matrices. Additionally, the choice of $m$ should be such that $m\ge n \cdot \max_{i=1,\dots,N}\{\tau^{i}\}$. \demoo
\end{remark}

\section{Numerical examples}\label{sec:numerical}
This section presents some numerical illustrations of our above model on theoretical distributions as well as real data. In both cases, we require a straightforward extension of Algorithm \ref{alg:IPH} to when each data point has a weight associated with it. Practically speaking, this is straightforwardly dealt with by providing a weight in each contribution for the conditional expectations of the E-step and replacing $N$ with the sum of weights in the E-step. This extension allows for the estimation of histograms, known distributions (considering a discrete version of the theoretical density), or more efficient calculations for when we have repeated values. We provide examples of the two latter uses. In all cases, we consider piecewise IPH distributions with continuous densities.

\subsection{Fitting to a given distribution}

It is well known that phase-type distributions struggle to fit peaked distributions where the peak does not happen close to the origin; that is, a large number of phases are required for adequate estimation. Thus, we first consider the estimation of the $\mathcal{N}(2,\,1/2)$ theoretical distribution (left truncated at $0$, as to have only positive values) by:
\begin{enumerate}
\item A piecewise IPH with large $K$ and small $p$.
\item A PH approximation to the piecewise IPH fit, as per Theorem \ref{abs_time_theo}.
\item A small and large homogeneous PH, for comparison.
\end{enumerate}
By ``small" and ``large," we have used subjective judgment, but we are somewhat limited by computational power for any dimensions far exceeding the ones presented here.

The idea is thus to use the density height as weights for a given grid (here, we take the mesh size to be $\Delta_t=0.05$), which is used as the observations in Algorithm \ref{abs_time_theo}. Applying this procedure can be appreciated in the left panel of Figure \ref{fd0}. We see that a very small phase-type dimension ($p=2$) is required to provide a good fit if we allow for piecewise constant rates at a small grid, in this case considering $41$ sub-intervals on the interval $[0,4]$. Since all matrices in each sub-intervals are intrinsically linked through Equation \eqref{eq:poisson_glm}, the number of parameters is kept low. We also see how an effective phase-type approximation is possible using the construction of Theorem \ref{abs_time_theo}, providing a visually indistinguishable representation from the piecewise counterpart and which enjoys a pathwise convergence interpretation. 

\begin{figure}[!htbp]
\centering
\includegraphics[clip, trim=0cm 2cm 0cm 0cm,width=1\textwidth]{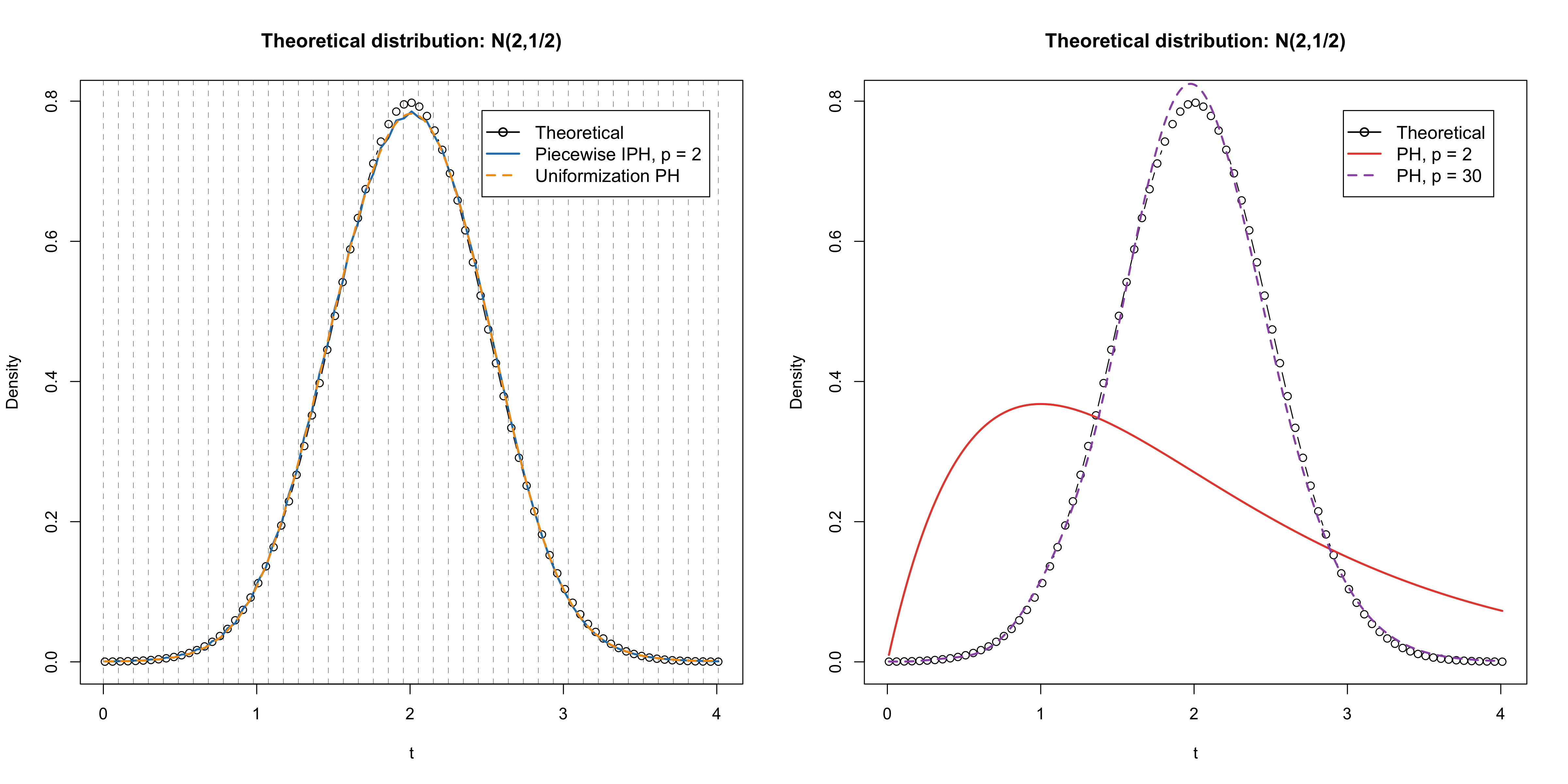}
\caption{Piecewise IPH (left) and PH (right) estimated densities to the theoretical $\mathcal{N}(2,\,1/2)$ distribution.
} \label{fd0}
\end{figure}

Note that the maximal absolute value of all diagonal matrices in each sub-interval for the piecewise IPH fit is $1056.8$, which from the expression \eqref{approx_subint_mat} implies that $n$ should be at least above the latter value to obtain a proper phase-type sub-intensity matrix. We have thus chosen $n=1500$, and then $m=n\cdot 4.01$, so the approximation is expected to be faithful up to the value $4.01$. Thus the resulting phase-type approximation has state space dimension $p\times m=12,030$, though the distribution is easy to manipulate, since formula \eqref{approx_ph_density_eval} involves matrix calculus in terms of the original state space dimension $p$. In contrast, the right panel of Figure \ref{fd0} shows that a $30$-dimensional phase-type distribution cannot provide a similar quality of fit (let alone the $2$-dimensional case). The EM algorithm which is required in this case (implemented as in \cite{AsmussenEM}) is comparatively slow for growing dimensions (and prohibitively slow for around $p=50,150$, depending on the language of implementation).

We now consider a more challenging setting with the aim of further showcasing the capabilities of our algorithm. Thus, we focus our attention on the mixture of $\mathcal{N}(2,\,1/2)$ and $\mathcal{N}(4,\,1/2)$ distributions, with a mixture weight of $0.55$ (left truncated at $0$, as to have only positive values), and we estimate two models:
\begin{enumerate}
\item A piecewise IPH with small $K$ and medium $p$.
\item A homogeneous PH, for comparison.
\end{enumerate}
For this multimodal density, we chose the breakpoints around valleys and summits of the theoretical density. An interesting comment is that choosing the breakpoints directly in the low point of a valley or exactly at the summit does not seems to be as effective. Given the chosen sub-intervals, we will use $p=10$ since it seems to be the first dimension to capture both modes correctly. A PH approximation to the piecewise IPH fit, as per Theorem \ref{abs_time_theo}, is not possible in this setting since the estimated sub-intensity matrices for all sub-intervals have an overall largest absolute value in the diagonal equal to about $7.5\cdot 10^{11}$, which implies that $m$ is in the order of magnitude of $10^{12}$, which is too large to make the computation of \eqref{approx_ph_density_eval} feasible. As a general warning, we have found that for the most challenging density shapes, Theorem \ref{abs_time_theo} will hold only theoretically, since practically it requires too many phases. This also confirms that sensible phase-type distributions do not suffice (including using the EM algorithm) in these cases.

The result of the estimation for this second case is provided in Figure \ref{fd1}, which shows the full strength of using piecewise IPH for heterogeneous data. We would like to comment that the dimension $p$ and the number of subintervals $K$ work together to provide an adequate fit and that a large $K$ with small $p$ does not work in this setting as it did for the previous unimodal distribution since the linear specification of $f^{(2)}$ in Equation \eqref{eq:poisson_glm} is no longer sufficient here. An alternative would be to consider spline specifications or higher polynomial terms. Here, we chose to increase the degrees of freedom by directly increasing $p$ (in this case, to $10$). 

Another feature that arises for estimated piecewise IPH distributions is the possible kink of the density at the endpoints of each sub-interval. These are not discontinuities and usually happen when the decreasing nature of a curve is not exponential, which is the case for gaussian decay. When examining the cumulative distribution function, the joining of the density of sub-intervals is differentiable; thus, the effect is not observable at that scale.
These kinks also appear in the application to mortality modeling in the next section.

\begin{figure}[!htbp]
\centering
\includegraphics[clip, trim=0cm 2cm 0cm 0cm,width=0.9\textwidth]{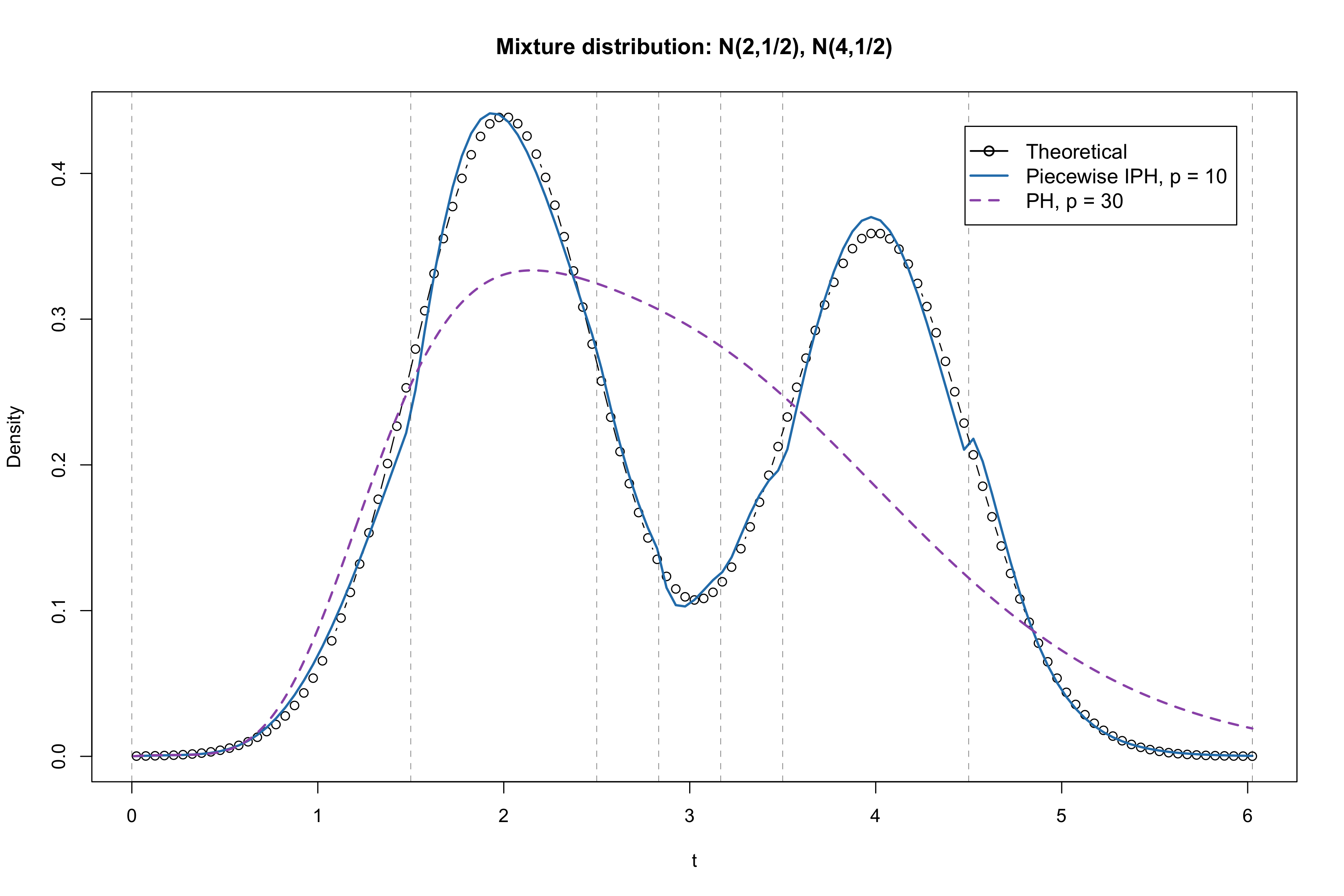}
\caption{Piecewise IPH and PH estimated densities to the theoretical mixture of $\mathcal{N}(2,\,1/2)$ and $\mathcal{N}(4,\,1/2)$  distributions, with mixing weight $0.55$.
} \label{fd1}
\end{figure}

\subsection{Mortality Modeling}\label{subsec:mortality_example}

The Human Mortality Database (\url{https://www.mortality.org/}) provides, among other things, mortality rates in a yearly resolution for several countries. We presently analyze the case of Danish males and females, from $2000$ up to $2020$. As before, we use as log-likelihood weights the implied density from the mortality rates (which is calculated as death to exposure ratio) and use the midpoints between ages as the observed ages (corresponding to the data $\vect{\tau}$). We divided for numerical purposes all data by $100$ when estimating it. However, in the empirical versus fitted plotting, we have used the original scale (in any case, piecewise IPH are closed under scaling).

\begin{figure}[!htbp]
\centering
\includegraphics[clip, trim=0cm 0cm 0cm 0cm,width=0.8\textwidth]{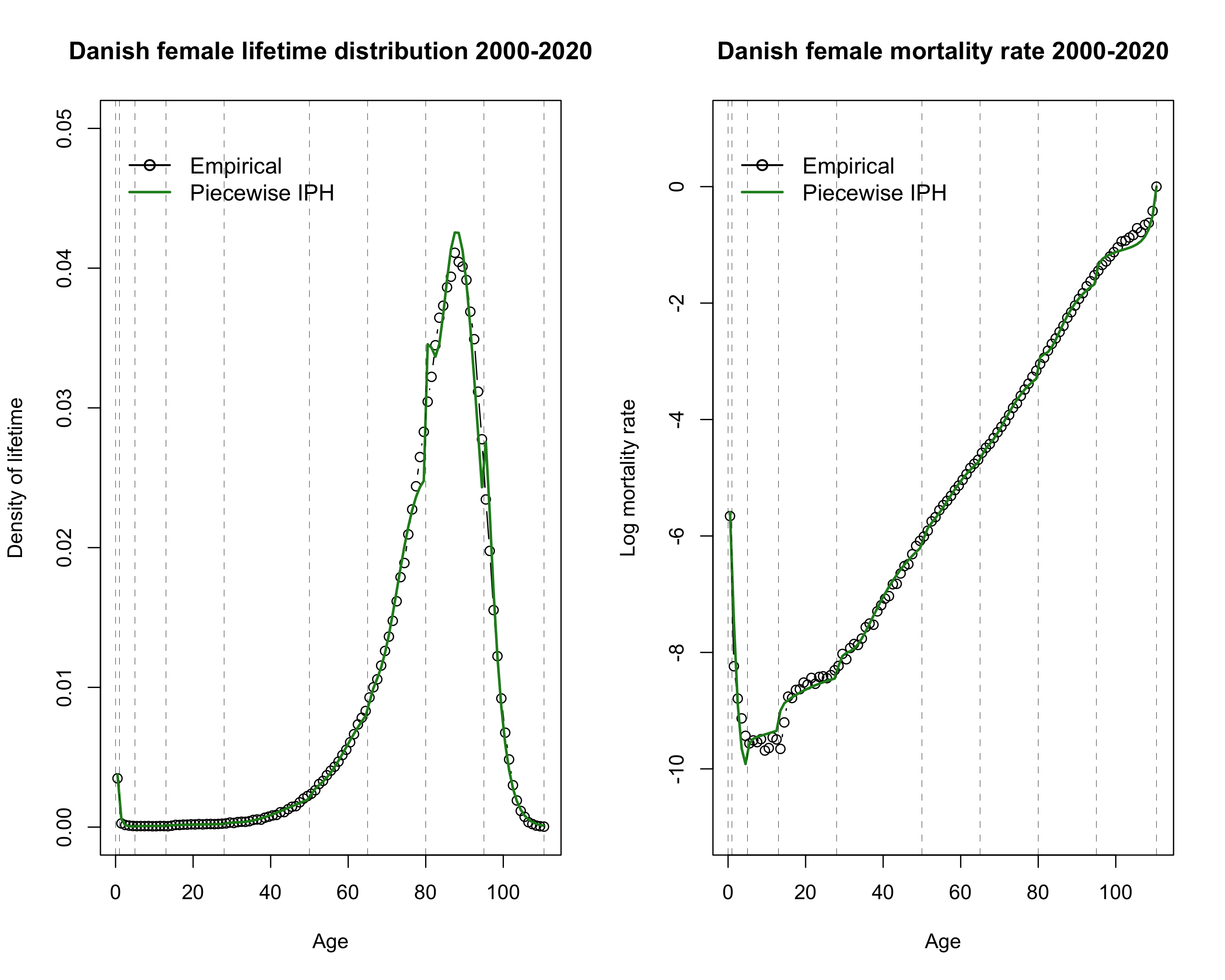}
\includegraphics[clip, trim=0cm 0cm 0cm 0cm,width=0.8\textwidth]{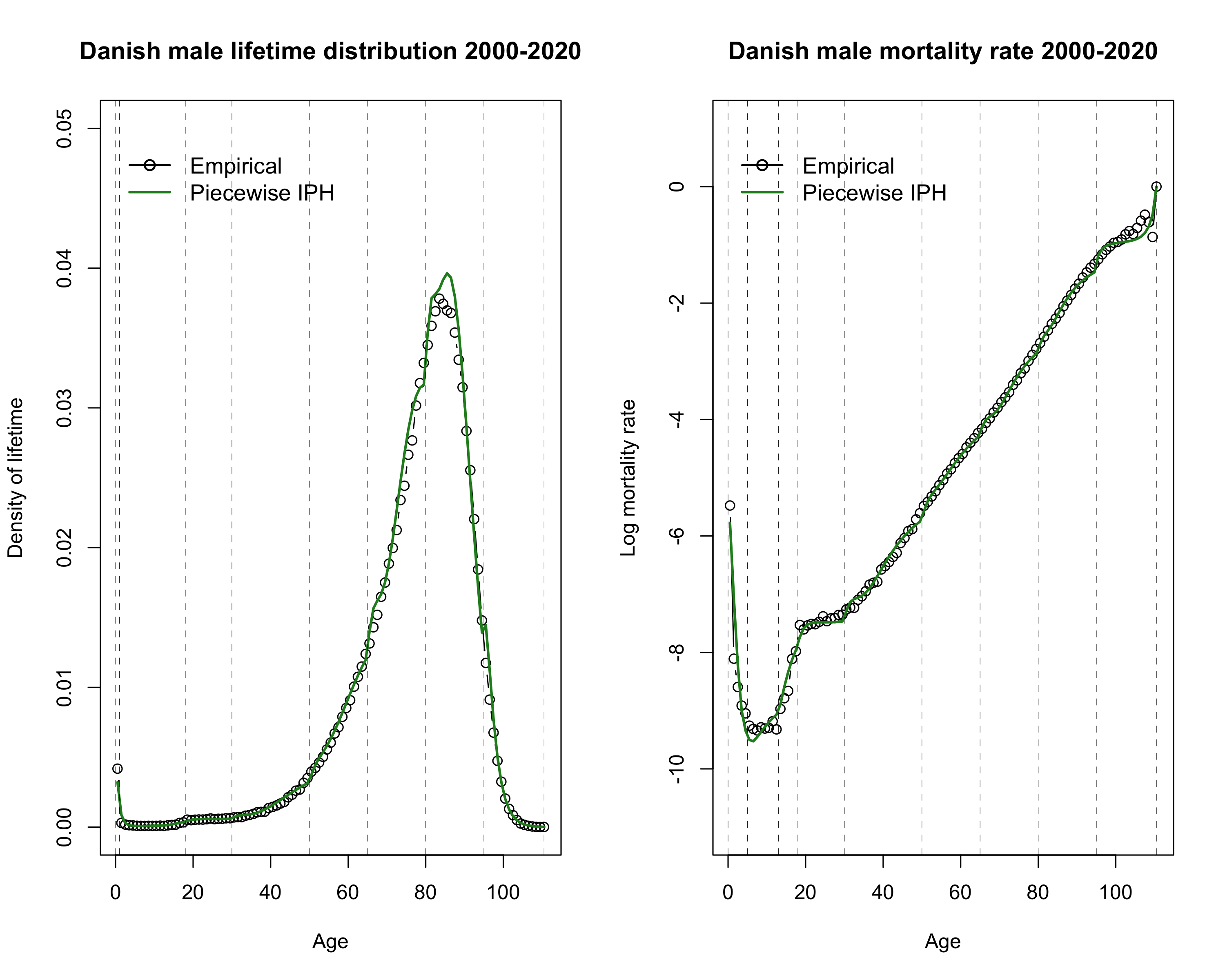}
\caption{Fitted versus empirical mortality curves using piecewise IPH distributions for Danish male and female populations from $2000$ to $2020$.
} \label{mort}
\end{figure}

We have chosen the sub-intervals to provide more divisions for rapidly changing regions in the lifetime density, resulting in $K=9$. We see from Figure $\ref{mort}$ that, despite some possible kinks at the endpoints of intervals, the fit is remarkably well behaved, especially given the specific features that make modeling the entire lifetime distribution challenging: the sharp decrease after birth and the disruptions happening at around age $20$ for both males and females. The increased mortality at the right endpoint also poses a challenge. The Gompertz-like behavior from around $30$ to $100$ is not in line with exponential decay; thus, regular sub-interval splits were required in this period. Finally, the resulting piecewise constant transition rates (in the log scale) are provided in Figure \ref{rates_fem} for females and in Figure \ref{rates_mal} for males, which are of interest for some disciplines that require mortality rate estimates, such as life insurance and pension applications.

\begin{figure}[!htbp]
\centering
\includegraphics[width=0.9\textwidth]{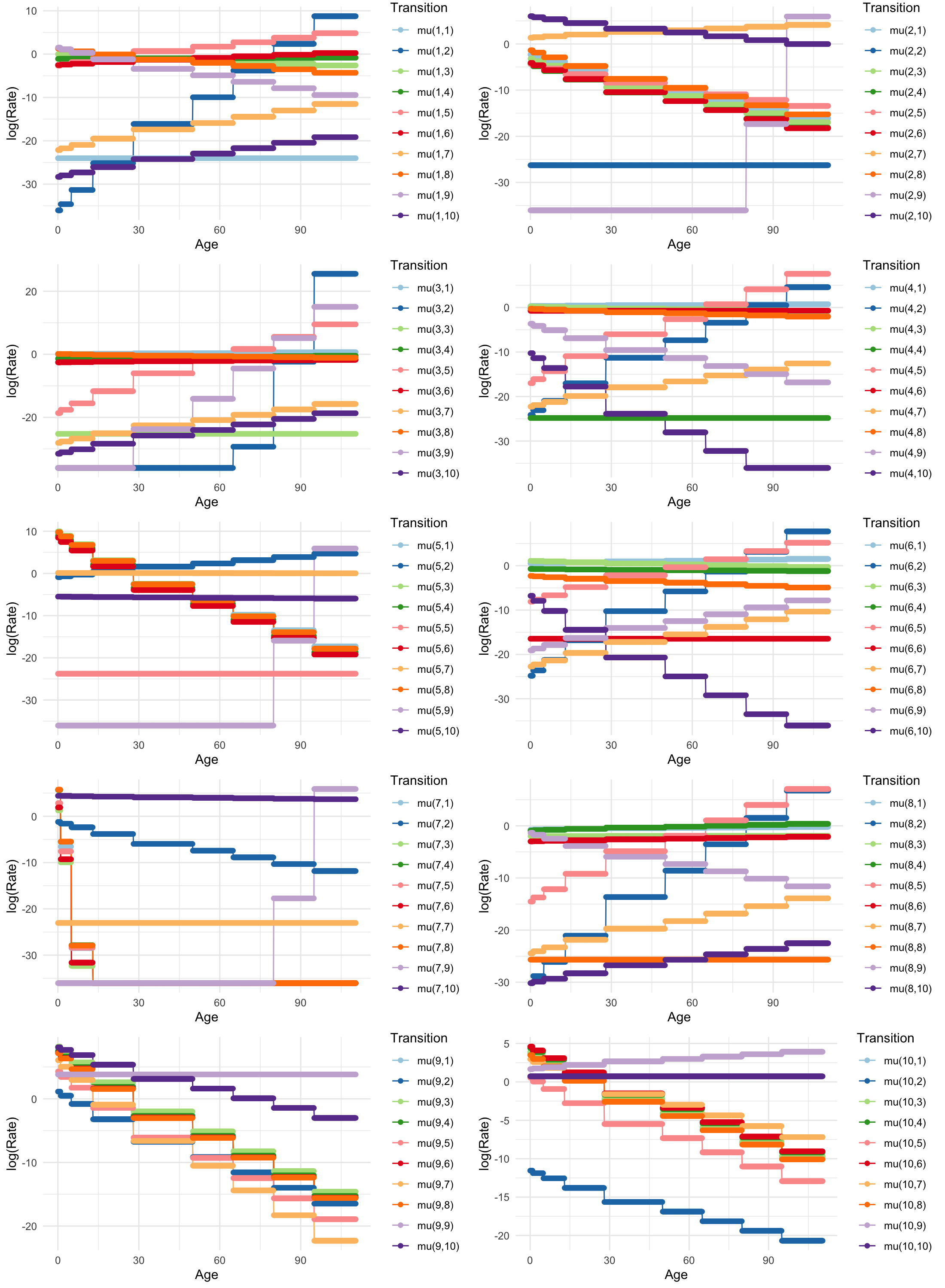}
\caption{Danish females: transition rates through time, for the fitted time-dependent sub-intensity matrix.
} \label{rates_fem}
\end{figure}

\begin{figure}[!htbp]
\centering
\includegraphics[width=0.9\textwidth]{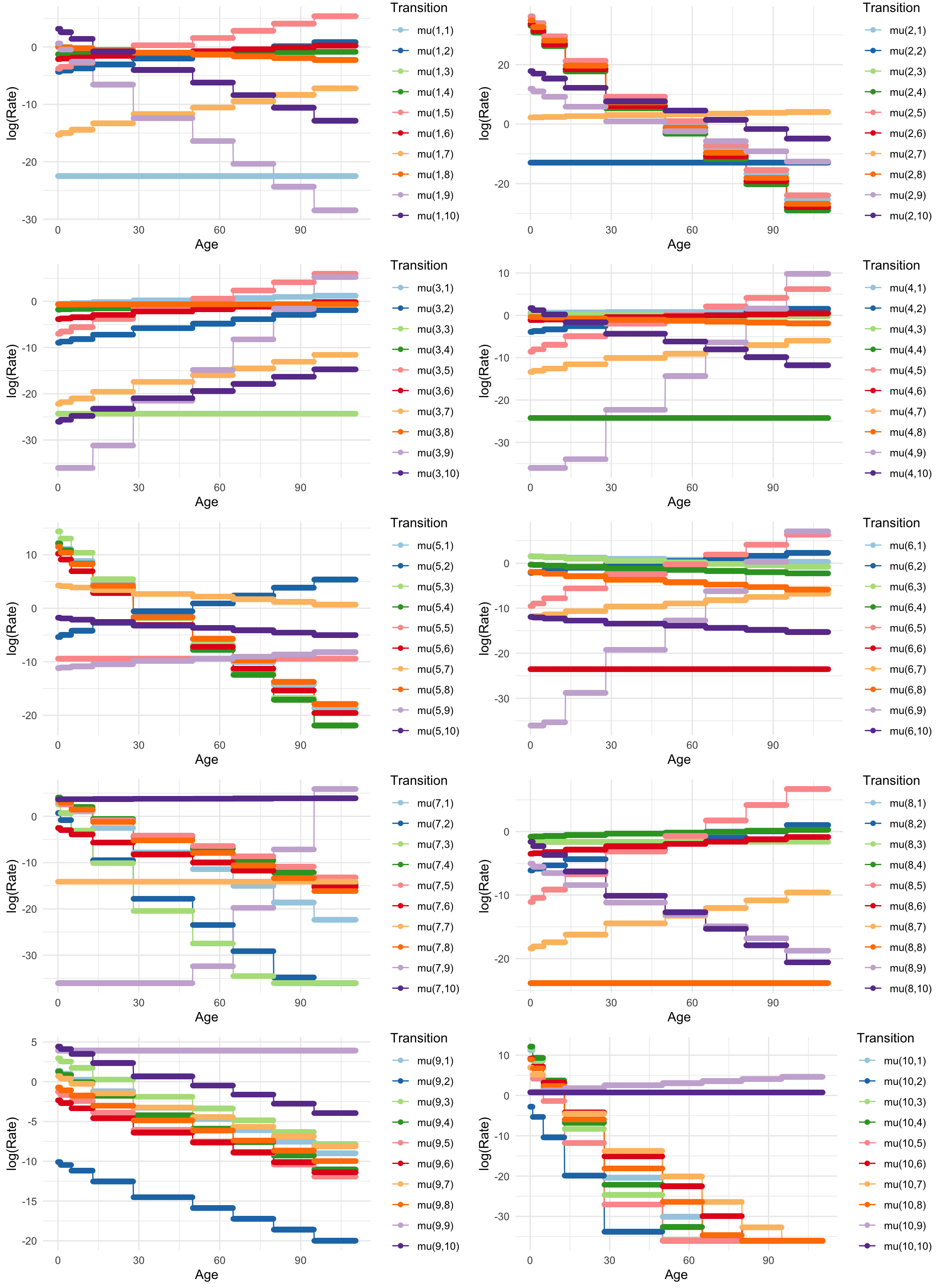}
\caption{Danish males: transition rates through time, for the fitted time-dependent sub-intensity matrix.
} \label{rates_mal}
\end{figure}

\section{Extensions}\label{sec:ext}
In this section, we discuss some possible extensions of theoretical and practical relevance that may be incorporated into our work but which is outside the scope of the present paper.  

\subsection{EM for IPHs with a pre-specified tail behavior}
The focal point of the paper is to handle general IPHs with non-commutative sub-intensity matrix functions using piecewise constant transition rates as an approximation when the grid becomes finer. For a finite number of grid points, this construction implicitly implies an exponential tail behavior on the IPH distribution from the last grid point, which may not be suitable for applications on heavy-tailed data, e.g., \ non-life insurance data. However, it is straightforward to adapt our framework to an intrinsic possibility of obtaining a non-exponential tail behavior, using methods from \cite{Albrecher-Bladt-2019}. The procedure goes as follows. Define a function $\lambda$ by
 \[ \lambda (u) = 
\left\{  
\begin{array}{ll}
1 & \mbox{if }\, u\leq s_K \\
h(u) & \mbox{if }\, u>s_K,
\end{array}
\right. \]
 for some non--negative function $h$, and a function $g$ given in terms of its inverse by
 $  g^{-1}(x) = \int_0^x \lambda (u){\rm d} u . $
 Then $\widetilde{\tau}=g(\tau)$, where $\tau\sim \mathrm{IPH}(\vect{\pi}, \mat{T}(\cdot))$ with $\mat{T}$ piecewise constant on the form \eqref{eq:T_piecewise}, has a distribution with survival function
 \begin{eqnarray}\label{eq:fixed_tail}
 \bar{F}_{\,\widetilde{\tau}}(y)
 &=&\left\{  
\begin{array}{ll}
\displaystyle
\vect{\pi}\left(\prod_{\ell=1}^{k(y)-1}{\rm e}^{\mat{T}_{\ell}(s_{\ell}-s_{\ell-1})} \right){\rm e}^{\mat{T}_{K}(y-s_{K-1})}\vect{e}
& \mbox{if }\,y\leq s_{K-1} \\[2em]
\displaystyle
\vect{\pi}\left(\prod_{\ell=1}^{K-1}{\rm e}^{\mat{T}_{\ell}(s_{\ell}-s_{\ell-1})} \right){\rm e}^{\mat{T}_{K}\int_{s_{K-1}}^yh(u){\rm d}u}\vect{e}
&\mbox{if }\, y> s_{K-1}.
\end{array}
 \right. 
 \end{eqnarray}
Hence, an extension of Algorithm \ref{alg:IPH} is  possible for the model \eqref{eq:fixed_tail} with a pre-specified tail behaviour according to the function $h$. Indeed, it suffices to apply the transformation $g^{-1}(x)$ of the data at the beginning of each step to reduce to the piecewise constant case, apply one EM step of Algorithm \ref{alg:IPH}, and then optimize the parameter of the $h$ function.

\subsection{Censoring and truncation}
In survival and event history analysis, one must take into account censoring and truncation mechanisms in the statistical estimation, see, e.g., \ \cite{Andersen2} for a survey. This naturally also applies to the estimation of IPHs and PHs, as these are absorption times of Markov jump processes.    

Incorporation of censoring mechanisms has long been established for estimation of PHs, cf.\ \cite{OlssonEM}, while the case of commuting matrices for IPHs is considered in \cite{Albrecher-Bladt-Yslas-2020}.\ As we have adapted \cite{AsmussenEM} to the inhomogeneous case by taking methods from \cite{Andersen} as onset, we believe that it is straightforward to incorporate the censoring mechanisms of \cite{OlssonEM} to our model by adapting said paper to the inhomogeneous case taking methods from \cite{Andersen2} as the onset. 

To the best of our knowledge, the incorporation of truncation mechanisms has not yet been established for the estimation PHs or IPHs. We do not believe that this extension is straightforward in either framework, as one would need to consider conditional distributions of PHs and IPHs in developing the EM algorithm. These conditional distributions do not simplify to path-independent distributions as seen for fully observed Markov processes.

\subsection{Covariate information}
It is straightforward to include time-independent covariate information in our statistical model. Indeed, in the Poisson regressions in the EM algorithms presented for the piecewise-constant transition rate case, one may incorporate any (possibly transformed) covariate vector linearly, though each individual would have their own intensity matrices (which the other parts of the algorithm need to keep track of). The mortality modeling of Danish lifetimes in Subsection \ref{subsec:mortality_example} is an example where sex could be used as a covariate.\\

\textbf{Acknowledgments.} Martin Bladt would like to acknowledge financial support from the Swiss National Science Foundation Project $200021\_191984$.\ The collaboration between the authors was strengthened by a research stay of Jamaal Ahmad at the University of Lausanne facilitated by Hansjörg Albrecher and partly supported by Oberstl{\o}jtnant Max N{\o}rgaard og Hustru Magda N{\o}rgaards Fond under File No.\ WZ 632-0055/21-2000.

\appendix
\section{The general EM algorithm}\label{apA}
\makeatletter
\let\origsection\section
\renewcommand\section{\@ifstar{\starsection}{\nostarsection}}

\newcommand\nostarsection[1]
{\sectionprelude\origsection{#1}\sectionpostlude}

\newcommand\starsection[1]
{\sectionprelude\origsection*{#1}\sectionpostlude}

\newcommand\sectionprelude{%
  \vspace{1em}
}

\newcommand\sectionpostlude{%
  \vspace{-10em}
}
\makeatother
\renewcommand\thealgorithm{\Alph{algorithm}}
\setcounter{algorithm}{0}
\begin{algorithm}[H]
\caption{{EM algorithm for general IPHs}}\label{alg:IPH_general}
\begin{algorithmic}
\State \textit{\textbf{Input}}: Data points $\mat{\tau} = (\tau^{(1)},\ldots,\tau^{(N)})$ and initial parameters $(\vect{\pi}^{(0)}, \vect{\theta}^{(0)})$.
\begin{enumerate} 
\item[ 0)] Set $m:=0$. 
\item[ 1)]\textit{E-step:}\ For $i\in \{1,\ldots,p\}$, compute the conditional statistics for the initial state, 
\begin{align*}
\bar{B}_i^{(m)} = \sum_{n=1}^N \frac{
\pi_i^{(m)}\vect{e}_i'\mat{\bar{P}}\big(0,\tau^{(n)}; \vect{\theta}^{(m)}\big)\mat{t}\big(\tau^{(n)}; \vect{\theta}^{(m)}\big) 
}{
f\big(\tau^{(n)}; \vect{\pi}^{(m)}, \vect{\theta}^{(m)}\big)
},
\end{align*}
and, for $j\in E$, $j\neq i$, and $\vect{\theta}\in \mat{\Theta}$ (on a suitable grid), compute the conditional expected log-likelihood for the transitions: 
\begin{align*}
\qquad\quad\ &\bar{L}_{ij}^{(m)}(\vect{\theta}) = \sum_{n=1}^N   \bigg(\int_{(0,\tau^{(n)}]} \log\!\left(\mu_{ij}(s;\bm{\theta})\right)\!\dd  \bar{N}^{(n,m)}_{ij}(s)-\int_0^{\tau^{(n)}} \bar{I}^{(n,m)}_i(s) \mu_{ij}(s;\bm{\theta})\dd  s    \bigg),
\end{align*}  \par
\vspace{0.4cm} where, for $j\neq p+1$, \vspace*{0.4cm}
\begin{align*}
\qquad\quad\quad \bar{I}^{(n,m)}_i(s) &=  \frac{
\vect{\pi}^{(m)}\mat{\bar{P}}\big(0,s; \vect{\theta}^{(m)}\big)\,\mat{e}_i\mat{e}_i'\,\mat{\bar{P}}\big(s,\tau^{(n)}; \vect{\theta}^{(m)}\big)\mat{t}\big(\tau^{(n)};\vect{\theta}^{(m)}\big)  
}{
f\big(\tau^{(n)}; \vect{\pi}^{(m)}, \vect{\theta}^{(m)}\big)
}, \\[0.5 cm]
\dd \bar{N}^{(n,m)}_{ij}(s) &=  
\frac{
 \vect{\pi}^{(m)}\mat{\bar{P}}\big(0,s; \vect{\theta}^{(m)})\,\mat{e}_i\mu_{ij}(s;\vect{\theta}^{(m)})\mat{e}_j'\,\mat{\bar{P}}\big(s,\tau^{(n)};\vect{\theta}^{(m)}\big)\mat{t}\big(\tau^{(n)};\vect{\theta}^{(m)}\big)   
}{
f\big(\tau^{(n)}; \vect{\pi}^{(m)}, \vect{\theta}^{(m)}\big)
}\dd s,
\end{align*}\par
\vspace{0.4cm} and, for $j=p+1$, \vspace*{0.4cm}
\begin{align*}
&\dd \bar{N}^{(n,m)}_{i,p+1}(s) = \frac{
\vect{\pi}^{(m)}\mat{\bar{P}}\big(0,s; \vect{\theta}^{(m)}\big)\,\vect{e}_it_i\big(s;\vect{\theta}^{(m)}\big) 
}{
f\big(\tau^{(n)}; \vect{\pi}^{(m)}, \vect{\theta}^{(m)}\big)
}\dd \varepsilon_{\tau^{(n)}}(s).
\end{align*}
\item[2)] \textit{M-step:}\ Update the parameters:
\begin{align*}
\hat{\pi}_i^{(m+1)} &= \frac{\bar{B}_i^{(m)}}{N}, \\[0.3 cm]
\hat{\vect{\theta}}^{(m+1)} &= \underset{\vect{\theta}}{\mathrm{arg\, max}}\,\sum_{i,j\in E\atop j\neq i} \bar{L}^{(m)}(\vect{\theta}).
\end{align*}
  \vspace*{0.1cm}  
\item[3)] Set $m:=m+1$ and GOTO 1) until a stopping rule is satisfied. \vspace*{0.2cm}
\end{enumerate}
    \State \textit{\textbf{Output}: Fitted parameters $(\hat{\vect{\pi}}, \hat{\vect{\theta}})$.}
\end{algorithmic}
\end{algorithm}
\makeatletter
\renewcommand\sectionprelude{%
  \vspace{0em}
}

\renewcommand\sectionpostlude{%
  \vspace{-1em}
}
\makeatother 

\newpage

\bibliographystyle{plainnat}

\end{document}